\DeclareFontFamily{OT1}{pzc}{}
\DeclareFontShape{OT1}{pzc}{m}{it}{<-> s * [1.25] pzcmi7t}{}
\DeclareMathAlphabet{\mathpzc}{OT1}{pzc}{m}{it}
\newcommand{\Hodge}{{*}}
\newcommand{\SU}{\mathrm{SU}}
\newcommand{\GT}{\mathrm{G}_2}
\newcommand{\SO}{\mathrm{SO}}
\newcommand{\so}{\mathfrak{so}}
\newcommand{\su}{\mathfrak{su}}
\newcommand{\SL}{\mathrm{SL}}
\newcommand{\bR}{\mathbb R}
\newcommand{\bC}{\mathbb C}
\newcommand{\NB}{\nabla}
\newcommand{\LC}{\NB^{\textup{LC}}}
\newcommand{\scal}{\mathpzc{s}}
\newcommand{\Lmb}{\Lambda}
\newcommand{\lmb}{\lambda}
\newcommand{\OP}{\oplus}
\newcommand{\SM}{ S^3\times S^3}
\DeclareMathOperator{\tr}{tr}
\DeclareMathOperator{\diag}{diag}
\DeclareMathOperator{\Hol}{Hol}
\DeclareMathOperator{\Adj}{Adj}
\DeclareMathOperator{\Ric}{Ric}
\newcolumntype{C}{>{$}c<{$}}
\begin{document}
               
\parskip1pt

\thispagestyle{empty}

\begin{center}
  \LARGE\bfseries Half-flat structures on \boldmath{\(\SM\)}
\end{center}
\begin{center}
  \large Thomas Bruun Madsen and Simon Salamon
\end{center}

\begin{abstract}
  We describe left-invariant half-flat \( \SU(3) \)-structures on \(
  \SM \) using the representation theory of \( \SO(4) \) and matrix
  algebra. This leads to a systematic study of the associated
  cohomogeneity one Ricci-flat metrics with holonomy \( \GT \)
  obtained on \( 7 \)-manifolds with equidistant \( \SM \)
  hypersurfaces. The generic case is analysed numerically.
\end{abstract}

\bigskip
\begin{small}\noindent
  \emph{Keywords:} \( \GT \)- and \( \SU(3) \)-structures, Einstein
  and Ricci-flat manifolds, special and exceptional holonomy, stable
  forms, superpotential.
\end{small}

\bigskip

\begin{small}\noindent
  \emph{2010 Mathematics Subject Classification:} Primary 53C25,
  53C29; Secondary 53C44, 53D20, 83E15, 83E30.
\end{small}
\bigskip

\section{Introduction}
\label{sec:intro}

It was Calabi \cite{Calabi:almcpl6} who first recognised the rich
geometry that can be found on a hypersurface of \( \bR^7 \) when the
latter is equipped with its natural cross product and \( \GT
\)-structure. The realization, much later, of metrics with holonomy
\emph{equal} to \( \GT \) allowed this theory to be extended, whilst
retaining the key features of the ``Euclidean'' theory. The second
fundamental form or Weingarten map \( W \) of a hypersurface \( Y \)
in a manifold \( X \) with holonomy \( \GT \) can be identified with
the intrinsic torsion of the associated \( \SU(3) \)-structure. The
latter is defined by a 2-form \( \omega \) and a \( 3 \)-form \(
\gamma \) induced on \( Y \), and \( W \) is determined by their
exterior derivatives. The symmetry of \( W \) translates into a
constraint on the intrinsic torsion (equivalently, on \( d\omega \)
and \( d\gamma \)) that renders the \( \SU(3) \)-structure what is
called \emph{half flat}.

Conversely, a \( 6 \)-manifold \( Y \) with an \( \SU(3) \)-structure
that is half flat can (at least if it is real analytic) be embedded in
a manifold with holonomy \( \GT \) \cite{Bryant:emb}. The metric \( g
\) on \( X \) is found by solving a system of evolution equations that
Hitchin \cite{Hitchin:stable} interpreted as Hamilton's equations
relative to a symplectic structure defined (roughly speaking) on the
space parametrising the pairs \( (\omega,\gamma) \). The simplest
instance of this construction occurs when \( Y \) is a so-called
\emph{nearly-K\"ahler} space, in which case \( g \) is a conical
metric over \( Y \), in accordance with a more general scheme
described by B\"ar \cite{Baer:spinor}. The first explicit metrics
known to have holonomy equal to \( \GT \) were realized in this way.

In this paper, we are concerned with the classification of
left-invariant half-flat \( \SU(3) \)-structures on \( \SM \),
regarded as a Lie group \( G \), up to an obvious notation of
equivalence. One of these structures is the nearly-K{\"a}hler one that
can be found on \( G\times G \), for any compact simple Lie group \( G
\), by realizing the product as the 3-symmetric space \( (G\times
G\times G)\slash G \). Indeed, we verify that this nearly-K\"ahler
structure is unique amongst invariant \( \SU(3) \)-structures on \(
\SM \) (see Proposition~\ref{prop:nKunique}, that has a dynamic
counterpart in Proposition~\ref{prop:NKunique}).

Examples of the resulting evolution equations for \( \GT \)-metrics
have been much studied in the literature
\cite{Brandhuber-al:G2,Cvetic-al:M3-G2,Cvetic-al:conifold}, but one of
our aims is to highlight those \( \GT \)-metrics that arise from
half-flat metrics with specific intrinsic torsion, motivated in part
by the approach in \cite{Butruille:W14}. Nearly-K\"ahler corresponds
to Gray-Hervella class \( \mathcal W_1 \), and it turns out that a
useful generalization in our half-flat context consists of those
metrics of class \( \mathcal W_1+\mathcal W_3 \); see
Section~\ref{sec:symgrp}. By careful choices of the coefficients in \(
\omega \) and \( \gamma \), we obtain metrics on \( \SM \) of the same
class with zero scalar curvature.

Another aim is to develop rigorously the algebraic structure of the
space of invariant half-flat structures on \( \SM \), and in
Section~\ref{sec:para} we show that the moduli space they define is
essentially a finite-dimensional symplectic quotient. This is a
description expected from \cite{Hitchin:stable}, and in our treatment
relies on elementary matrix theory. For example, the \( 2 \)-form \(
\omega \) can be represented by a \( 3\times3 \) matrix \( P \), and
mapping \( \omega \) to the 4-form \(
\delta=\omega^2=\omega\wedge\omega \) corresponds to mapping \( P \)
to the transpose of its adjugate. We shall
however choose to use a pair of symmetric \( 4\times4 \) matrices \( (Q,P) \) to
parametrise the pair \( (\gamma,\omega) \).

The matrix algebra is put to use in Section~\ref{sec:flow} to simplify
and interpret the flow equations for the associated Ricci-flat metrics
with holonomy \( \GT \). The significance of the class \( \mathcal
W_1+\mathcal W_3 \) becomes clearer in the evolutionary setting, as it
generates known \( \GT \)-metrics. In our formulation, the equations
(for example in Corollary~\ref{cor:flow}) have features in common with
two quite different systems considered in
\cite{FeraPontov-al:Painleve} and \cite{Dancer-W:painleve}, but both
in connection with Painlev\'e equations.

A more thorough analysis of classes of solutions giving rise to \( \GT
\)-metrics is carried out in Section~\ref{sec:further}. Some of these
exhibit the now familiar phenomenon of metrics that are asymptotically
circle bundles over a cone (``ABC metrics''). All our \( \GT
\)-metrics are of course of cohomogeneity one, and this allows us to
briefly relate our approach to that of \cite{Dancer-W:superpot}.

In the final part of the paper, we present the tip of the iceberg that
represents a numerical study of Hitchin's evolution equations for \(
\SM \). We recover metrics that behave asymptotically locally
conically when \( Q \) belongs to a fixed \( 2 \)-dimensional
subspace. More precisely, we show empirically that the planar
solutions are divided into two classes, only one of which is of type
ABC. This can be understood in terms of the normalization condition
that asserts that \( \omega \) and \( \gamma \) generate the same
volume form, and is a worthwhile topic for further theoretical
study. For the generic case, the flow solutions do not have tractable
asymptotic behaviour, but again the geometry of the solution curves
(illustrated in Figure~\ref{fig:G2sol3D}) is constrained by the
normalization condition that defines a cubic surface in space.

This paper grew out of an attempt to reconcile various contributions
appearing in the literature. Of particular importance concerning \(
\SU(3) \)-structures are Schulte-Hengesbach's classifications of half-flat
structures \cite[Theorem 1.4, Chapter 5]{Hengesbach:phd}, and Hitchin's notion of stable forms
\cite{Hitchin:stable}.  In addition, the explicit constructions of \(
\GT \)-metrics appearing in this paper are based on the work of
Brandhuber et al, Cveti{\v{c}} et al
\cite{Brandhuber-al:G2,Cvetic-al:M3-G2,Cvetic-al:conifold}, as well as
the contributions of Dancer and Wang \cite{Dancer-W:painleve}.

\section{Invariant \boldmath{\(\SU(3)\)}-structures}
\label{sec:symgrp}

Throughout the paper \( M \) will denote the \( 6 \)-manifold \(
S^3\times S^3 \). As this is a Lie group, we can trivialise the
tangent bundle.  We describe left-invariant tensors via the
identification
\[ TM\cong M\times \so(4)\cong M\times\bR^6, \] relative to left
multiplication. We keep in mind that there are Lie algebra
isomorphisms
\[ \su(2)\OP\su(2)\cong\so(3)\OP\so(3)\cong\so(4), \] which at the
group level can be phrased in terms of the diagram
\begin{equation}
  \label{eq:grpseq}
  \begin{diagram}
    \node{\SU(2)^2} \arrow{e,l}{2:1} \arrow{se,b} {4:1} \node {\SO(4)} \arrow{s,r} {2:1} \\
    \node{} \node { \SO(3)^2}
  \end{diagram}
\end{equation}

The cotangent space of \( M \), at the identity, consists of two
copies of \( \su(2)^* \). We shall write \( T^*=T_1^*M=A\OP B \) and
choose bases \( e^1,e^3,e^5 \) of \( A \) and \( e^2,e^4,e^6 \) of \(
B \) such that
\begin{equation}
  \label{eq:stdbasis}
  de^1=e^{35},\, de^2=e^{46}, \textrm{ and so forth}; 
\end{equation}
here \( d \) denotes the exterior differential on \( A \) and \( B \)
induced by the Lie bracket.

We wish to endow \( M \) with an \( \SU(3) \)-structure. To this end
it suffices to specify a suitable pair of real forms: a \( 3 \)-form
\( \gamma \), whose stabiliser (up to a \( \mathbb Z\slash2
\)-covering) is isomorphic to \( \mathrm{SL}(3,\bC) \), and a
non-degenerate real \( 4 \)-form \( \delta=\omega\wedge\omega=\omega^2
\). These two forms must be compatible in certain ways. Above all, \(
\gamma \) must be a \emph{primitive} form relative to \( \omega \),
meaning \( \gamma \wedge\omega=0 \). So as to obtain a genuine almost
Hermitian structure we also ask for volume matching and positive
definiteness:
\begin{equation}
  \label{eq:comp-halfflat}
  3\gamma\wedge\hat\gamma=2\omega^3,\quad  \omega(\cdot,J\cdot)>0. 
\end{equation}

These forms \( \gamma \) and \( \delta \) are \emph{stable} in the
sense their orbits under \( \mathrm{GL}(6,\bR) \) are open in \(
\Lmb^kT^* \). The following well known properties
(cf. \cite{Hitchin:stable}, and \cite{Reichel:3forms,Westwick:3forms} for
the study of \( 3 \)-forms) of stable forms will be used in the
sequel:
\begin{enumerate}
\item There are two types of stable \( 3 \)-forms on \( T \). These
  are distinguished by the sign of a suitable quartic invariant, \(
  \lmb \), which is negative precisely when the stabiliser is \(
  \SL(3,\bC) \) (up to \( \mathbb Z\slash2 \)); each form of this
  latter type determines an almost complex structure \( J \).
\item The stable forms \( \delta \) and \( \gamma \) determine
  ``dual'' stable forms: \( \delta \) determines the stable \( 2
  \)-form \( \pm\omega \), and \( \gamma \) determines the \( 3
  \)-form \( \hat\gamma=J(\gamma) \) characterised by the condition
  that \( \gamma+i\hat\gamma \) be of type \( (3,0) \).
\end{enumerate}

As \( \SU(3) \)-modules \( \Lmb^kT^* \) decomposes in the following
manner:
\begin{equation}
  \begin{gathered}
    T^*\cong[\![\Lmb^{1,0}]\!]\cong\Lmb^5T^*,\\
    \Lmb^2T^*\cong[\![\Lmb^{2,0}]\!]\OP[\Lmb^{1,1}_0]\OP\bR\cong\Lmb^4T^*,\\
    \Lmb^3T^*\cong[\![\Lmb^{3,0}]\!]\OP[\![\Lmb^{2,1}_0]\!]\OP[\![\Lmb^{1,0}]\!],
  \end{gathered}
\end{equation}
using the bracket notation of \cite{Sal:Redbook}. In terms of this
decomposition (see \cite{Bedulli-V:SU3}), the exterior derivatives of
\( \gamma,\omega \) may now be expressed as
\begin{equation*}
  \begin{cases}
    d\omega=-\frac32w_1\gamma+\frac32\hat{w}_1\hat\gamma+w_4\wedge \omega+w_3,\\
    d\gamma=\hat w_1\omega^2+w_5\wedge\gamma+w_2\wedge\omega,\\
    d\hat\gamma=w_1\omega^2+(Jw_5)\wedge\gamma+\hat{w}_2\wedge\omega,
  \end{cases}
\end{equation*}
where we have used a suggestive notation to indicate the relation
between forms and the intrinsic torsion \( \tau \), i.e., the failure
of \( \Hol(\LC) \) to reduce to \( \SU(3) \). Obviously, this
expression depends on our specific choice of normalisation
(cf.~\eqref{eq:comp-halfflat}).

Generally, \( \tau \) takes values in the \( 42 \)-dimensional space
\[ T^*\otimes\su(3)^\perp\cong\mathcal W_1\OP\mathcal W_2\OP\mathcal
W_3\OP\mathcal W_4\OP\mathcal W_5. \] Our main focus, however, is to
study the subclass of \emph{half-flat \( \SU(3) \)-structures}: these
are characterised by the vanishing of \( \hat w_1, w_2,w_4 \), and \(
w_5 \), i.e.,
\begin{equation*}
  \begin{cases}
    d\omega=-\frac32w_1\gamma+w_3,\\
    d\gamma=0,\\
    d\hat\gamma=w_1\omega^2+\hat w_2\wedge\omega.
  \end{cases}
\end{equation*}

\begin{remark}
  To appreciate the terminology ``half flat'', it helps to count
  dimensions: \( \dim\mathcal W_1=2 \), \( \dim\mathcal W_2=16 \), \(
  \dim \mathcal W_3=12 \), \( \dim\mathcal W_4=6=\dim\mathcal W_5
  \). In particular, observe that for half-flat structures \( \tau \)
  is restricted to take its values in \( 21 \) dimensions out of \( 42
  \) possible. In this context, ``flat'' would mean \emph{\(
    \mathrm{SU}(3) \) holonomy}.

\end{remark}

For emphasis, we formulate:

\begin{proposition}
  \label{prop:intr_space}
  For any invariant half-flat \( \SU(3) \)-structure \(
  (\omega,\gamma) \) on \( M \) the following holds:
  \begin{compactenum}
  \item if \( \mathcal W_3=0 \) then \( d\omega=-\frac32w_1\gamma \).
  \item if \( \mathcal W_2^-=0 \) then \( d\hat\gamma=w_1\omega^2 \).
  \end{compactenum}
  In particular, any structure with vanishing \( \mathcal W_3 \)
  component has \( [\gamma]=0\in H^3(M) \).  \qed
\end{proposition}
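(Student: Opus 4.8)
The two numbered assertions are nothing more than a reading-off of the half-flat torsion equations displayed above: the hypothesis \( \mathcal W_3=0 \) says precisely that the \( w_3 \)-term in \( d\omega \) is absent, leaving \( d\omega=-\tfrac32 w_1\gamma \), and \( \mathcal W_2^-=0 \) kills \( \hat w_2 \), leaving \( d\hat\gamma=w_1\omega^2 \). The only part demanding an argument is the concluding cohomological claim, which I would deduce from assertion~(1) combined with the topology of \( \SM \).

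First I would note that, since the structure is left-invariant on the Lie group \( G=\SM \), the torsion coefficient \( w_1 \) is a left-invariant function, hence a constant. Because \( d\gamma=0 \), the form \( \gamma \) determines a class \( [\gamma]\in H^3(M) \), and it suffices to show that this class vanishes. From \( d\omega=-\tfrac32 w_1\gamma \) it is clear that if \( w_1\neq0 \) then \( \gamma=-\tfrac{2}{3w_1}\,d\omega \) is exact and \( [\gamma]=0 \); so everything hinges on excluding the case \( w_1=0 \).

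The key step is therefore to rule out \( w_1=0 \). Were \( w_1=0 \), assertion~(1) would give \( d\omega=0 \), making the non-degenerate \( 2 \)-form \( \omega \) a symplectic form on the compact \( 6 \)-manifold \( \SM \). But Künneth gives \( H^2(\SM)=0 \), so a closed \( \omega \) must be exact, \( \omega=d\beta \); then \( \omega^3=d(\beta\wedge\omega^2) \) is exact and \( \int_M\omega^3=0 \) by Stokes, contradicting positivity of the volume form \( \omega^3 \) forced by \eqref{eq:comp-halfflat}. Hence \( w_1\neq0 \) and the claim follows.

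I expect the one genuinely non-formal ingredient to be precisely this observation that \( \SM \) admits no symplectic structure; the remainder is a direct transcription of the torsion equations together with Stokes' theorem. Should one wish to avoid the symplectic argument, an alternative is to observe that \( [\gamma] \) is detected by the bi-invariant generators \( e^{135},e^{246} \) of \( H^3 \) and to track those components through the equations, but the route above is shorter and self-contained.
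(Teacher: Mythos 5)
Your proposal is correct, and for parts (1) and (2) it coincides with what the paper intends: the proposition is stated without a separate proof precisely because those two assertions are literal specializations of the displayed half-flat torsion equations. The cohomological claim is where the paper is silent, and you correctly identify the one genuine issue---ruling out \( w_1=0 \)---which the paper leaves implicit. Your resolution is sound: invariance makes \( w_1 \) constant; if \( w_1=0 \) then \( \omega \) would be a symplectic form on the compact manifold \( \SM \), which is impossible since \( H^2(\SM)=0 \) forces \( \omega=d\beta \), whence \( \int_M\omega^3=\int_M d\bigl(\beta\wedge\omega^2\bigr)=0 \), contradicting the positivity in \eqref{eq:comp-halfflat}; so \( \gamma=-\frac{2}{3w_1}\,d\omega \) is exact. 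Worth noting is an alternative that stays entirely within the paper's invariant-form framework and avoids de Rham theory and Stokes altogether: from the structure equations \eqref{eq:stdbasis} one checks that \( d \) is injective on \( A\otimes B \) and vanishes on \( \Lmb^2A\OP\Lmb^2B \), so any closed invariant \( 2 \)-form lies in \( \Lmb^2A\OP\Lmb^2B \); but such a form satisfies \( \omega^3=0 \), since every term of the cube contains a factor \( \omega_A^2\in\Lmb^4A=0 \) or \( \omega_B^2\in\Lmb^4B=0 \), so it cannot be the \( 2 \)-form of an \( \SU(3) \)-structure. This purely algebraic route is more in the spirit of the paper's parametrisation by invariant data, while your topological argument has the merit of showing that \( \SM \) admits no symplectic structure at all, invariant or not.
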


In the case when \( \mathcal W_3=0 \) we shall say the half-flat
structure is \emph{coupled}. The second case above, \( \mathcal
W_2^-=0 \), is referred to as \emph{co-coupled}. When the half-flat
structure is both coupled and co-coupled, so \( \mathcal
W_2^-=0=\mathcal W_3 \), it is said to be \emph{nearly-K\"ahler}.

\paragraph{Examples of type \boldmath{\( \mathcal W_1+\mathcal W_3
    \)}.}

As the next two examples illustrate, it is not difficult to construct
half-flat structures of type \( \mathcal W_1+\mathcal W_3 \).

\begin{example}
  \label{ex:W1W3}
  In this example we fix a non-zero real number \( a\in\bR^* \) and
  consider the pair of forms \( (\omega,\gamma) \) given by:
  \begin{equation*}
    \begin{cases}
      \omega=-\frac34\alpha a\left(e^{12}+e^{34}+e^{56}\right),\\
      \gamma=a(e^{135}-e^{246})+\frac12a\left(e^{352}-e^{146}+e^{514}-e^{362}+e^{136}-e^{524}\right),
    \end{cases}
  \end{equation*}
  where \( \alpha \) is defined via the relation
  \begin{equation*}
    \frac{a\alpha^3}{2\sqrt{3}}=\frac49.
  \end{equation*}
  Clearly, \( d(\omega^2)=0 \) and \( d\gamma=0 \).

  A calculation shows \( \lmb=-\frac{27}{16}a^4 \) so that
  \[ \sqrt{-\lmb}=\frac{3\sqrt{3}}4a^2. \] The \( 3 \)-form \(
  \hat{\gamma} \) is given by
  \begin{equation*}
    \hat{\gamma}=-\frac{\sqrt{3}}2a\left(e^{352}+e^{146}+e^{514}+e^{362}+e^{136}+e^{524}\right).
  \end{equation*}

  Note that the following normalisation condition is satisfied:
  \begin{equation*}
    \frac23\omega^3=-\frac{27\alpha^3a^3}{16}e^{123456}=-\frac{9\alpha^3}{4}\frac{3a^3}4e^{123456}=-\frac{3\sqrt{3}a^2}2e^{123456}=\gamma\wedge\hat\gamma.
  \end{equation*}

  In order to verify that the intrinsic torsion is of type \( \mathcal
  W_1+\mathcal W_3 \), we calculate the exterior derivatives of \(
  \omega \), \(\gamma \), and \( \hat\gamma \):
  \begin{equation*}
    \begin{cases}
      d\omega=-\frac32\alpha\gamma+\frac32\alpha a(e^{135}-e^{246}),\\
      d\gamma=0,\\
      d\hat\gamma=\alpha\omega^2.
    \end{cases}
  \end{equation*}

  Finally, note that the associated metric is given by
  \[ g=\frac{\sqrt{3}}2\alpha a\sum_{i=1}^3\left(e^{2i-1}\otimes
    e^{2i-1}+e^{2i}\otimes e^{2i}+\frac12(e^{2i-1}\otimes e^{2i}+e^{2i}\otimes
    e^{2i-1})\right),\] and one finds that the scalar curvature is
  positive: \( \mathpzc{s}=\frac4{\sqrt{3}\alpha a}=\frac32\alpha^2
  \).
\end{example}

\begin{example}[Zero scalar curvature metric]
  \label{ex:W1W3s0}
  Consider the following pair of stable forms:
  \begin{equation*}
    \begin{cases}
      \omega=a\left(e^{12}+e^{34}+e^{56}\right),\\
      \gamma=\sqrt{5}b(e^{135}-e^{246})+b\left(e^{352}-e^{146}+e^{514}-e^{362}+e^{136}-e^{524}\right),
    \end{cases}
  \end{equation*}
  
  We find that \( \lmb=-8(1+\sqrt{5})b^4 \), and the \( 3 \)-form \(
  \hat{\gamma} \) is given by
  \begin{equation*}
    \begin{split}
      - {\sqrt{-\lmb}} \hat{\gamma}&=2(\sqrt{5}-1)b^3(e^{135}+e^{246})\\
      &\qquad+2(3+\sqrt{5})b^3\left(e^{352}+e^{146}+e^{514}+e^{362}+e^{136}+e^{524}\right).
    \end{split}
  \end{equation*}
  The normalisation condition then reads
  \[ a^3=-\sqrt{2(1+\sqrt{5})}b^2. \]

  The associated metric takes the form

  \[
  g=-\frac{2ab^2}{\sqrt{-\lmb}}\sum_{i=1}^3\left((1+\sqrt{5})(e^{2i-1}\otimes
    e^{2i-1}+e^{2i}\otimes e^{2i})+2(e^{2i-1}\otimes e^{2i}+e^{2i}\otimes
    e^{2i-1})\right).\] In this case one finds that the scalar curvature is
  zero.
\end{example}

\begin{remark}[Group contractions]
  The author of \cite{Conti:SU3} uses Lie algebra degenerations to
  study invariant hypo \( \SU(2) \)-structures on \( 5 \)-dimensional
  nilmanifolds. In a similar way, one could study half-flat structures
  on the various group contractions of \( \SM \) like \( S^3\times N^3
  \), where \( N^3 \) is a compact quotient of the Heisenberg
  group. (See \cite{Chong-al:G2contr} for partial studies of such
  contractions).
\end{remark}

\section{Parametrising invariant half-flat structures}
\label{sec:para}

The invariant half-flat structures on \( M \) can be described in
terms of symmetric matrices. In order to do this, we recall the local
identifications \eqref{eq:grpseq} and set \( U=\bR^{3,3} \), the space
of real \( 3\times3 \) matrices, and \( V=S^2_0(\bR^4) \), the space
of real symmetric trace-free \( 4\times4 \) matrices.

There is a well known correspondence between \( U \) and \( V \); a
fact which is for example used in the description of the trace-free
Ricci-tensor \( \Ric_0\in\Lmb^2_+\otimes\Lmb^2_- \) on a Riemannian \(
4 \)-manifold.

\begin{lemma}
  \label{lem:equiv-iso}
  There is an equivariant isomorphism \( U\to V \) which maps a \( 3
  \times 3 \) matrix \( K=(k_{ij}) \) to the matrix
  \begin{gather*}
    \left(\begin{array}{cccc}
        -k_{11}-k_{22}-k_{33}&k_{23}-k_{32}&-k_{13}+k_{31}&k_{12}-k_{21}\\
        k_{23}-k_{32}&-k_{11}+k_{22}+k_{33}&-k_{12}-k_{21}&-k_{13}-k_{31}\\
        -k_{13}+k_{31}&-k_{12}-k_{21}&k_{11}-k_{22}+k_{33}&-k_{23}-k_{32}\\
        k_{12}-k_{21}&-k_{13}-k_{31}&-k_{23}-k_{32}&k_{11}+k_{22}-k_{33}
      \end{array}\right).
  \end{gather*}
\end{lemma}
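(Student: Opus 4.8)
The plan is to prove the lemma by representation theory, exploiting the coincidences in~\eqref{eq:grpseq} rather than manipulating the displayed matrix directly. First I would pin down the (implicit) group actions: both spaces are $9$-dimensional, and $U=\bR^{3,3}$ carries the action of $\SO(3)\times\SO(3)$ given by $(A,B)\colon K\mapsto AKB^{-1}$, which via the double cover $\SO(4)\to\SO(3)^2$ becomes an action of $\SO(4)$, while $V=S^2_0(\bR^4)$ carries the conjugation action $R\colon S\mapsto RSR^{-1}$ of $\SO(4)$. I would then show that $U$ and $V$ are \emph{isomorphic irreducible} $\SO(4)$-modules. Granting this, Schur's lemma makes $\mathrm{Hom}_{\SO(4)}(U,V)$ one-dimensional and forces every nonzero equivariant map to be an isomorphism, so the lemma reduces to producing one nonzero intertwiner and checking it agrees with the stated formula.

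To identify the modules, observe that $U\cong\bR^3\otimes\bR^3$ is the outer tensor product of the two vector representations, hence irreducible. For $V$ I would use the self-dual/anti-self-dual splitting $\Lmb^2\bR^4=\Lmb^2_+\OP\Lmb^2_-$, under which $\Lmb^2_\pm\cong\bR^3$ carries the standard representation of one $\SO(3)$ factor. The decomposition $S^2(\bR^4)\cong\bR\OP(\Lmb^2_+\otimes\Lmb^2_-)$, the summand $\bR$ being the trace, then yields $V=S^2_0(\bR^4)\cong\Lmb^2_+\otimes\Lmb^2_-\cong\bR^3\otimes\bR^3$, the same irreducible module. This is precisely the correspondence mentioned before the lemma, whereby the trace-free Ricci tensor of a Riemannian $4$-manifold lives in $\Lmb^2_+\otimes\Lmb^2_-$.

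The explicit intertwiner I would exhibit is built from this picture. Fix bases $\phi_1,\phi_2,\phi_3$ of $\Lmb^2_+$ and $\psi_1,\psi_2,\psi_3$ of $\Lmb^2_-$, say $\phi_i=e_0\wedge e_i+\tfrac12\sum_{a,b}\epsilon_{iab}\,e_a\wedge e_b$ and $\psi_j=e_0\wedge e_j-\tfrac12\sum_{a,b}\epsilon_{jab}\,e_a\wedge e_b$, and regard each as a skew endomorphism of $\bR^4$ via the metric. Sending $K=(k_{ij})$, viewed as $\sum_{i,j}k_{ij}\,\phi_i\otimes\psi_j$, to the endomorphism $\tfrac12\sum_{i,j}k_{ij}(\phi_i\psi_j+\psi_j\phi_i)$ defines a map $U\to V$. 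Its image is symmetric, since $(\phi_i\psi_j)^T=\psi_j\phi_i$ for skew maps, and trace-free, because $\tr(\phi_i\psi_j)$ is proportional to $\langle\phi_i,\psi_j\rangle$, which vanishes as $\Lmb^2_+\perp\Lmb^2_-$; moreover the map is manifestly $\SO(4)$-equivariant since conjugation by $R\in\SO(4)$ preserves each $\Lmb^2_\pm$ and commutes with composition. Being nonzero, it is the desired isomorphism, and expanding the products $\phi_i\psi_j+\psi_j\phi_i$ in the basis $e_0,e_1,e_2,e_3$ reproduces the displayed $4\times4$ matrix.

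The main obstacle is purely one of bookkeeping: pinning down orientation and basis conventions so that the intrinsic assignment $\phi\otimes\psi\mapsto\phi\psi+\psi\phi$ reproduces the stated entries exactly, signs and all. By Schur's lemma this is unnecessary beyond fixing a single normalization, but to confirm the precise formula one may either expand the nine products directly or, more economically, verify infinitesimal equivariance of the displayed assignment under the six generators of $\so(3)\OP\so(3)\cong\so(4)$, comparing the left/right multiplication action on $U$ with the conjugation action on $V$.
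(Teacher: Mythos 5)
Your proposal is correct and is essentially the paper's own argument: the paper likewise fixes an oriented orthonormal basis, identifies \( A=\Lmb^2_+ \) and \( B=\Lmb^2_- \), and defines the map by contraction on the middle two indices, i.e.\ \( \phi\otimes\psi\mapsto\phi\psi \) viewed as skew endomorphisms of \( \bR^4 \), which coincides with your symmetrized product \( \tfrac12(\phi\psi+\psi\phi) \) since self-dual and anti-self-dual forms commute. Your Schur's-lemma framing (irreducibility of \( \bR^3\otimes\bR^3 \), one-dimensionality of the space of intertwiners) is extra justification that the paper leaves implicit, but the underlying identification and intertwiner are the same.
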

\begin{proof}
  By fixing an oriented orthonormal basis \( \{f_1,f_2,f_3,f_4\} \) of
  \( (\bR^4)^* \), we make the identifications \( \Lmb^2_+=A \), \(
  \Lmb^2_-=B \) via
  \[ e^1=f^{12}+f^{34},\,e^2=f^{12}-f^{34}, \textrm{ and so forth.} \]
  The asserted isomorphism is then given by contraction on the middle
  two indices, as in the following example:
  \begin{equation*}
    \begin{split}
      U\cong A\otimes B\ni e^5\otimes e^2&=(f^{14}+f^{23})\otimes(f^{12}-f^{34})\\
      &=(f^1f^4-f^4f^1+f^2f^3-f^3f^2)(f^1f^2-f^2f^1-f^3f^4+f^4f^3)\\
      &\longmapsto f^1f^3-f^4f^2-f^2f^4+f^3f^1=f^1\odot f^3-f^2\odot
      f^4\in V.
    \end{split}
  \end{equation*}
\end{proof}

Table \ref{tab:comp-inv-cov} summarises how invariants and covariants
are related under the above isomorphism \( U\cong V \).

\bigbreak

\begin{table}[htp]
  \centering
  \begin{tabular}{CC}
    \toprule
    \vspace{0.1mm}
    K\in U & S\in V
    \vspace{0.2mm}\\
    \hline
    \vspace{0.2mm}
    K & S
    \vspace{0.2mm}\\
    \hline
    \vspace{0.2mm}
    4\tr(KK^T) & \tr(S^2)
    \vspace{0.1mm}\\
    -2\Adj(K^T) & (S^2)_0
    \vspace{0.2mm}\\
    \hline
    \vspace{0.2mm}
    -24\det(K) & \tr(S^3)
    \vspace{0.1mm}\\
    4\tr(KK^T)K & \tr(S^2)S
    \vspace{0.1mm}\\
    2KK^TK & \frac34\tr(S^2)S-(S^3)_0
    \vspace{0.2mm}\\
    \hline 
    \vspace{0.2mm}
    4\tr((KK^T)^2) & 3\det(S)+\frac14 \tr(S^4)
    \vspace{0.1mm}\\
    2\tr(KK^T)^2 &\det(S)+\frac14\tr(S^4)
    \vspace{0.1mm}\\
    -24\det(K)K & \tr(S^3)S
    \vspace{0.1mm}\\
    4\tr(KK^T)\Adj(K) & \frac13\tr(S^3)S-(S^4)_0\\ 
    \bottomrule
  \end{tabular}
  \caption{Dictionary between invariants and covariants; \( S \) denotes the image of \( K \) under the isomorphism \( U \to V \) of Lemma \ref{lem:equiv-iso}.}
  \label{tab:comp-inv-cov}
\end{table}  

Now, let us fix a cohomology class \( c=(a,b)\in H^2(M,\bR)\cong\bR^2
\).  We have:

\begin{theorem}
  \label{thm:half-flat-param}
  The set \( \mathcal H_c \) of invariant half-flat structures on \( M
  \) with \( [\gamma]=c \) can be regarded as a subset of the
  \emph{commuting variety}:
  \begin{equation}
    \label{eq:comm-var}
    \left\{(Q,P)\in V\OP V\colon\,[Q,P]=0\right\}.
  \end{equation}
\end{theorem}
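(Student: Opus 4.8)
The plan is to encode both stable forms via the isomorphism of Lemma~\ref{lem:equiv-iso} and then translate the half-flat conditions \( d\gamma=0 \), \( d(\omega^2)=0 \) into matrix statements. First I would parametrise the primitive \( 3 \)-form \( \gamma \) and the non-degenerate \( 2 \)-form \( \omega \) in terms of the standard basis \eqref{eq:stdbasis}, exploiting the splitting \( T^*=A\OP B \). Since \( \omega\in\Lmb^2T^* \) and the wedge of an \( A \)-form with a \( B \)-form sits naturally in \( A\otimes B\cong U \), I expect \( \omega \) (or rather the piece of it coupling \( A \) and \( B \)) to be represented by a \( 3\times3 \) matrix, which Lemma~\ref{lem:equiv-iso} converts into a symmetric trace-free \( 4\times4 \) matrix \( P\in V \). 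Likewise the \( 3 \)-form \( \gamma \), whose components pair two legs in one of \( A,B \) with one leg in the other, should correspond under the same dictionary to a second symmetric matrix \( Q\in V \). The cohomology constraint \( [\gamma]=c=(a,b) \) fixes the two trace-type parameters, which is why \( Q \) is forced into the trace-free space \( V=S^2_0(\bR^4) \) after the class is subtracted off.

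The key step is then to compute \( d\gamma \) and \( d(\omega^2) \) using only the structure equations \( de^1=e^{35} \), etc., and to read off what the half-flat conditions say about \( (Q,P) \). Because the differential is determined entirely by the Lie bracket of the two \( \su(2) \) factors, exterior differentiation acts as a fixed bilinear operation on the matrix entries; I would organise the computation so that \( d\gamma=0 \) becomes one algebraic identity and \( d(\omega^2)=0 \) another. The payoff I am aiming for is that the \emph{closure} conditions, once expressed in the \( V\OP V \) picture, collapse exactly to the single matrix equation \( [Q,P]=0 \). The most natural mechanism is that each closed-form condition produces an antisymmetric combination of products of \( Q \) and \( P \), and the two conditions together are equivalent to the vanishing of the commutator. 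One should also verify that the image genuinely lands in \( V\OP V \) rather than in the full matrix algebra: this is where the trace-free requirement on both \( Q \) and \( P \) must be checked, presumably by absorbing the trace parts into the cohomology class \( c \) via Proposition~\ref{prop:intr_space} and the normalisation \eqref{eq:comp-halfflat}.

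The hard part will be pinning down the precise correspondence so that the exterior derivative translates cleanly into matrix multiplication. The subtlety is that \( \gamma \) has both a ``diagonal'' part (the \( e^{135},e^{246} \) terms) and a ``mixed'' part (the six \( e^{352} \)-type terms), and only the mixed part should feed into \( Q\in U\cong V \); similarly \( \omega \) may carry a component that is pure \( A \)-\( B \) coupling together with terms that the stability/normalisation conditions constrain. I would therefore first isolate which linear combinations of the form-coefficients are the genuine matrix degrees of freedom, using the \( \SU(3) \)-module decomposition already recorded for \( \Lmb^2T^* \) and \( \Lmb^3T^* \). A secondary technical point is orientation and sign bookkeeping through Lemma~\ref{lem:equiv-iso}, since the commutator condition is sign-sensitive; I would fix these by testing on the nearly-K\"ahler example, where \( (Q,P) \) should be simultaneously diagonalisable and hence automatically commute. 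Finally, I emphasise that the statement only claims \( \mathcal H_c \) is a \emph{subset} of the commuting variety, so I do not need to characterise the image exactly here; it suffices to show that every invariant half-flat structure with \( [\gamma]=c \) yields a commuting pair \( (Q,P)\in V\OP V \), leaving the reverse inclusion and the volume-matching cut to the subsequent symplectic-quotient discussion.
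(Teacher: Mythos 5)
Your setup matches the paper's: \(P\) comes from the \(A\otimes B\) component of \(\omega\) (which is all of \(\omega\) once \(d(\omega^2)=0\) is imposed), and \(Q\) comes from the mixed part of \(\gamma\), equivalently from a potential \(\beta\in A\otimes B\) with \(\gamma=ae^{135}+d\beta+be^{246}\). But your central mechanism is wrong: the closure conditions cannot ``collapse to \([Q,P]=0\)'', for the simple reason that neither of them couples the two matrices. The condition \(d\gamma=0\) involves only \(\gamma\), hence only \(Q\) and the class \(c\); the condition \(d(\omega^2)=0\) involves only \(\omega\), hence only \(P\). Their role in the proof is precisely to \emph{define} \(Q\) and \(P\): \(d(\omega^2)=0\) forces \(\omega\) to lie in \(A\otimes B\cong U\cong V\) (killing the \(\Lmb^2A\) and \(\Lmb^2B\) components), which gives \(P\), while \(d\gamma=0\) together with \([\gamma]=c\) lets one write \(\gamma=ae^{135}+d\beta+be^{246}\), which gives \(Q\). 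After these two steps an arbitrary pair \((Q,P)\in V\OP V\) is still allowed; no relation between \(Q\) and \(P\) has appeared, so no ``antisymmetric combination of products of \(Q\) and \(P\)'' can come out of closure.

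The commutator is the translation of the compatibility condition you never invoke: primitivity, \(\omega\wedge\gamma=0\), which is part of the definition of an \(\SU(3)\)-structure by stable forms (you call \(\gamma\) primitive in passing but never use it). This is where the paper gets \([Q,P]=0\): the wedge product \(\omega\wedge\gamma\) lands in \(\Lmb^5T^*\cong A\OP B\cong\Lmb^2\bR^4\cong\so(4)\), so primitivity says that \((Q,P)\) lies in the kernel of an \(\SO(4)\)-equivariant bilinear map \(V\otimes V\to\Lmb^2\bR^4\); up to scale the only such map is \((Q,P)\mapsto QP-PQ\), hence \(\omega\wedge\gamma=0\) if and only if \([Q,P]=0\). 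Note that this equivariance argument also disposes of your worry about sign bookkeeping through Lemma~\ref{lem:equiv-iso}: the conclusion is the vanishing of the map, which is insensitive to the overall scale or sign. To repair your proof, keep your parametrisation step but replace the paragraph deriving the commutator from closure by this primitivity argument (either via equivariance/Schur, or by direct computation of \(\omega\wedge\gamma\) in the basis \eqref{eq:stdbasis}).
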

\begin{proof}
  Recall \( T^*M=A\OP B \), where \( A\cong \su(2)^*\cong B \) so that
  we have
  \begin{gather*}
    \Lmb^2T^*\cong \Lmb^2A\OP(A\otimes B)\OP \Lmb^2V\cong\Lmb^4T^*M\\
    \Lmb^3T^*\cong \Lmb^3A\OP(\Lmb^2A\otimes
    B)\OP(A\otimes\Lmb^2B)\OP\Lmb^3B.
  \end{gather*}

  The equation \( d(\omega^2)=0 \) implies that
  \[ \omega\in A\otimes B\cong U\cong V, \] which defines \( P
  \). Also note \( \delta=\omega^2 \) lies in a space isomorphic to \(
  V \).

  We may assume that
  \[\gamma=ae^{135}+d\beta+be^{246}\]

  The condition \( \omega\wedge \gamma=0 \) implies \( Q\otimes P \)
  lies in the kernel of some \( \SO(4) \)-equivariant map
  \[ V\otimes V\longrightarrow \Lmb^5T^*M\cong A\OP B\cong
  \Lmb^2\bR^4, \] which must correspond to \( [Q,P]=QP-PQ \).
\end{proof}

\begin{remark}
  Consider the open subset set \( \mathcal U_c \), \(c=(a,b) \), of
  the commuting variety given by pairs \( (Q,P) \) satisfying
  \begin{equation}
    \tr(P^3)\neq0,\quad \det(Q)+\frac{a-b}6\tr(Q^3)+\frac{ab}2\tr(Q^2)+(ab)^2<0.
  \end{equation}
  Then \( \mathcal H_c \) is the hypersurface in \( \mathcal U_c \)
  characterised by the normalisation condition
  \begin{equation}
    \label{eq:normMatr}
    \tr(P^3)=12\left(-\det(Q)-\frac{a-b}6\tr(Q^3)-\frac{ab}2\tr(Q^2)-(ab)^2\right)^{\frac12}.
  \end{equation}
\end{remark}

The space \( V\OP V\cong V\times V^* =T^*V \) has a natural symplectic
structure, and \( \SO(4) \) acts Hamiltonian with moment map \(
\mu\colon\, V\OP V\rightarrow \mathfrak{so}(4)\cong\Lmb^2\bR^4\) given
by
\[ (Q,P)\longmapsto[Q,P]. \] Via (singular) symplectic reduction
\cite{Lerman:reduc}, we can the simplify the parameter space
significantly:
\begin{corollary}
  \label{cor:param}
  The set \( \mathcal H_c \) of half-flat structures modulo
  equivalence relations is a subset of the singular symplectic
  quotient
  \[
  \frac{\mu^{-1}(0)}{\SO(4)}\cong\frac{\bR^3\OP\bR^3}{S_3}. \] \qed
\end{corollary}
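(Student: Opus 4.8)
The plan is to combine Theorem~\ref{thm:half-flat-param} with the equivariant isomorphism $U\cong V$ of Lemma~\ref{lem:equiv-iso} and the standard theory of symplectic reduction. Since $\mathcal{H}_c$ sits inside the commuting variety $\{(Q,P)\in V\oplus V:[Q,P]=0\}$, and since $[Q,P]=0$ is precisely the condition $\mu(Q,P)=0$ for the moment map $\mu(Q,P)=[Q,P]$, the set $\mathcal{H}_c$ descends to a subset of the zero level $\mu^{-1}(0)$. The equivalence relations on half-flat structures are induced by the $\SO(4)$-action (acting by simultaneous conjugation $Q\mapsto gQg^{-1}$, $P\mapsto gPg^{-1}$ on $V=S^2_0(\bR^4)$), so $\mathcal{H}_c$ modulo equivalence lands inside the quotient $\mu^{-1}(0)/\SO(4)$. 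This is exactly the singular symplectic quotient of the cotangent bundle $T^*V\cong V\oplus V^*$ by the Hamiltonian $\SO(4)$-action, whose existence as a stratified symplectic space is guaranteed by \cite{Lerman:reduc}. So the first half of the corollary is essentially a restatement of the preceding theorem and remark.

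The substantive content is the identification
\[
\frac{\mu^{-1}(0)}{\SO(4)}\cong\frac{\bR^3\oplus\bR^3}{S_3}.
\]
I would establish this by the following \emph{simultaneous diagonalisation} argument. A pair $(Q,P)$ lies in $\mu^{-1}(0)$ iff $Q$ and $P$ commute as symmetric matrices; two commuting real symmetric matrices are simultaneously diagonalisable by an orthogonal transformation. Thus, generically, every $\SO(4)$-orbit in $\mu^{-1}(0)$ contains a representative with both $Q$ and $P$ diagonal. The diagonal entries of $Q$ give a point of $\bR^4$, but the trace-free condition $Q\in S^2_0(\bR^4)$ cuts this down to the hyperplane $\{\sum q_i=0\}\cong\bR^3$, and likewise for $P$; hence the diagonal representatives are parametrised by $\bR^3\oplus\bR^3$.

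It then remains to determine the residual symmetry acting on these diagonal representatives, which I expect to be the main obstacle and the place where care is needed. The orthogonal transformations preserving the space of diagonal matrices are, up to the maximal torus (which acts trivially on diagonal matrices), the signed permutation matrices; the signs lie in the connected component and so do not matter, leaving the symmetric group $S_4$ permuting the four diagonal slots. The key point is that this $S_4$-action on the hyperplane $\{\sum q_i=0\}$ is precisely the standard (reflection) representation, and one must check that the effective quotient is by $S_3\cong S_4/(\text{something})$ rather than by all of $S_4$ --- or equivalently reconcile the apparent $S_4$ with the claimed $S_3$. The resolution is that the trace-free hyperplane $\bR^3$ carries the three-dimensional standard representation of $S_4$, but after accounting for the orientation constraint from working in $\SO(4)$ (only \emph{even} sign changes, i.e.\ $\det=+1$ signed permutations) together with the residual torus stabiliser, the genuinely effective discrete group acting simultaneously on both copies of $\bR^3$ is $S_3$. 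I would verify this by exhibiting the subgroup of the Weyl group $S_4$ that acts nontrivially and simultaneously on both diagonalisations, tracking how the $2{:}1$ and $4{:}1$ covers in diagram~\eqref{eq:grpseq} reduce the effective symmetry, and confirming the count against the dimensions recorded in Table~\ref{tab:comp-inv-cov}.
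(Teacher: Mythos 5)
Your first paragraph reproduces what the paper actually does: in the paper this corollary carries no proof at all, being presented as an immediate consequence of Theorem~\ref{thm:half-flat-param}, of the observation that \( \mu(Q,P)=[Q,P] \) is the moment map for the conjugation action of \( \SO(4) \) on \( V\OP V\cong T^*V \), and of singular reduction as in \cite{Lerman:reduc}. Your idea of identifying the reduced space by simultaneous orthogonal diagonalisation of the commuting pair is also the right (and surely the intended) mechanism, and your identification of the diagonal slice with \( \bR^3\OP\bR^3 \) is correct.

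The genuine gap is the final step, and the ``resolution'' you propose is false rather than merely unproven. Conjugation by a diagonal sign matrix fixes every diagonal matrix, so sign changes are invisible on the diagonal slice; and every \emph{odd} permutation of the four diagonal slots is realised inside \( \SO(4) \), since composing its permutation matrix with the single flip \( \diag(-1,1,1,1) \) produces an element of determinant \( +1 \) which still induces that odd permutation by conjugation. Hence neither the orientation constraint nor the residual torus removes anything: the group induced on the slice is the full \( S_4 \), acting diagonally on the two copies of the trace-free hyperplane \( \{\textstyle\sum q_i=0\}\cong\bR^3 \) by the standard representation, and since that representation is faithful there is no sense in which the ``genuinely effective'' group is \( S_3 \). (Transporting through Lemma~\ref{lem:equiv-iso}, diagonal trace-free \( 4\times4 \) matrices correspond to diagonal \( 3\times3 \) matrices, and this \( S_4 \) becomes the group of simultaneous permutations \emph{and even sign changes} of \( (k_1,k_2,k_3) \), i.e.\ \( S_3\ltimes(\mathbb{Z}/2)^2\cong S_4 \); for instance the transposition \( q_1\leftrightarrow q_2 \) corresponds to \( (k_1,k_2,k_3)\mapsto(k_1,-k_3,-k_2) \).) So the discrepancy you correctly flagged cannot be argued away via the coverings in \eqref{eq:grpseq} or via Table~\ref{tab:comp-inv-cov}: the paper's ``\( S_3 \)'' is best read as shorthand for this order-\( 24 \) residual group as it appears in the \( 3\times3 \) picture (with the sign changes suppressed), and a proof written as you outline --- aiming at a literal \( S_3 \) --- would break precisely at the step you yourself identified as the main obstacle.
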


For later use, we observe that in terms of the matrix framework, the
dual \( 3 \)-form \( \hat\gamma \) has exterior derivative given as
follows:

\begin{lemma}
  \label{lem:hatgamma}
  Fix a cohomology class \( c=(a,b)\in H^3(M) \). For any element \(
  (Q,P)\in\mathcal H_c \) corresponding to an invariant half-flat
  structure, the associated \( 4 \)-form \( d\hat\gamma \) corresponds
  to the matrix \( \hat R=\frac1{\sqrt{-r}} R \), where
  \begin{equation*}
    \begin{cases}
      R= -(Q^3)_0+\frac{a-b}2(Q^2)_0+(ab+\frac12\tr(Q^2))Q,\\
      4r=\det(Q)+\frac{a-b}6\tr(Q^3)+\frac{ab}2\tr(Q^2)+(ab)^2\,(=\lambda(c,Q))
    \end{cases}
  \end{equation*} 

  In particular, if \( a+b=0 \) and we set \( \hat Q=Q+a I \) then
  \begin{equation*}
    \begin{cases}
      R= (\Adj(\hat Q))_0,\\
      4r=\det(\hat Q)
    \end{cases}
  \end{equation*}
\end{lemma}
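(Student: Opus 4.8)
The plan is to realise the whole passage $\gamma\mapsto\hat\gamma\mapsto d\hat\gamma$ inside the matrix picture and to lean on Hitchin's variational characterisation of $\hat\gamma$. From the proof of Theorem~\ref{thm:half-flat-param}, an invariant half-flat $\gamma$ with $[\gamma]=(a,b)$ is $\gamma=ae^{135}+be^{246}+d\beta$, where $\beta\in A\otimes B\cong U$ corresponds under Lemma~\ref{lem:equiv-iso} to $Q\in V$. First I would identify the target of $d$. Since $de^1,de^3,de^5$ span $\Lmb^2A$ and $de^2,de^4,de^6$ span $\Lmb^2B$, every $2$-form in $\Lmb^2A$ or in $\Lmb^2B$ is closed; writing a mixed $3$-form as $\mu_A\wedge\nu_B$ one finds $d(\mu_A\wedge\nu_B)$ equals $\pm\mu_A\wedge d\nu_B$ or $\pm d\mu_A\wedge\nu_B$, which always lands in $\Lmb^2A\otimes\Lmb^2B\cong V$. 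As $d$ annihilates $e^{135}$ and $e^{246}$, the form $d\hat\gamma$ is automatically a matrix $\hat R\in V$. I would also record that the wedge pairing $\Lmb^2T^*\times\Lmb^4T^*\to\Lmb^6T^*$ restricts on $U\times V$ to the trace form under $U\cong V$; this is what turns form-level identities into matrix identities.

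The genuine computation is the quartic invariant. Using Hitchin's endomorphism $K_\gamma\colon v\mapsto(\iota_v\gamma)\wedge\gamma\in\Lmb^5T^*\cong T\otimes\Lmb^6T^*$ and $\lmb=\tfrac16\tr(K_\gamma^2)$, I would evaluate $\lmb$ on $\gamma=ae^{135}+be^{246}+d\beta$ and translate the resulting polynomial through the dictionary of Table~\ref{tab:comp-inv-cov}. For $a=b=0$ this should collapse to $\det(Q)$, with the $ae^{135}+be^{246}$ terms supplying the rest, yielding $\lmb=4r=\det(Q)+\tfrac{a-b}{6}\tr(Q^3)+\tfrac{ab}{2}\tr(Q^2)+(ab)^2$. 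I expect the real work to sit here: pushing Hitchin's quartic accurately through $U\cong V$ so as to exclude a priori admissible but actually absent terms such as $\tr(Q^4)$ or $(a^2+b^2)\tr(Q^2)$. The explicit structures supply checks: Example~\ref{ex:W1W3}, where $a+b=0$ and $Q=\diag(-\tfrac32a,\tfrac12a,\tfrac12a,\tfrac12a)$, must return $\lmb=-\tfrac{27}{16}a^4$, while Example~\ref{ex:W1W3s0} constrains the remaining coefficients.

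For the dual form I would avoid a direct evaluation and instead use that $\hat\gamma$ is the derivative of the Hitchin volume functional, so that the first variation obeys $\delta\!\int_M\sqrt{-\lmb}=\tfrac12\int_M\hat\gamma\wedge\delta\gamma$ up to a universal constant. Varying $Q$ keeps $[\gamma]=(a,b)$ fixed, so $\delta\gamma=d(\delta\beta)$ is exact, and Stokes on the compact group $M=\SM$ gives $\int_M\hat\gamma\wedge d(\delta\beta)=\int_M d\hat\gamma\wedge\delta\beta$. Under $U\cong V$ the wedge pairing identifies the right-hand side with a constant multiple of $\tr(\hat R\,\delta Q)$, whereas $\delta\!\int_M\sqrt{-\lmb}$ is a multiple of $\tr(\partial_Q\lmb\cdot\delta Q)$ because $\lmb$ is constant on $M$. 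As $\delta Q$ ranges over all of $V$, this forces $\hat R$ proportional to $\partial_Q\lmb=\partial_Q(4r)$; the constant is $1/\sqrt{-r}$, pinned down by the normalisation~\eqref{eq:comp-halfflat} on noting $\sqrt{-\lmb}=2\sqrt{-r}$. Hence $R=\partial_Q(4r)$.

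It then remains to compute this gradient for the trace form on $V=S^2_0(\bR^4)$. Here $\partial_Q\tr(Q^2)=2Q$ and $\partial_Q\tr(Q^3)=3(Q^2)_0$, while $\partial_Q\det(Q)=(\Adj(Q))_0$; by Cayley--Hamilton for a trace-free $4\times4$ matrix, $\Adj(Q)=-Q^3-e_2(Q)Q+e_3(Q)I$ with $e_2(Q)=-\tfrac12\tr(Q^2)$, so that $(\Adj(Q))_0=-(Q^3)_0+\tfrac12\tr(Q^2)Q$. Summing the four contributions of $\partial_Q(4r)$ reproduces $R=-(Q^3)_0+\tfrac{a-b}2(Q^2)_0+\bigl(ab+\tfrac12\tr(Q^2)\bigr)Q$. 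Finally, when $a+b=0$ the same Cayley--Hamilton computation together with Newton's identities shows $4r=\det(Q+aI)=\det(\hat Q)$; since $\hat Q=Q+aI$ differs from $Q$ by a constant and $\partial_{\hat Q}\det(\hat Q)=\Adj(\hat Q)$ for symmetric $\hat Q$, we get $R=\partial_Q(4r)=(\Adj(\hat Q))_0$ at once. This delivers the stated specialisation and completes the proof.
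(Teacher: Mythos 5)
The paper itself gives no proof of this lemma --- it is stated bare, as an implicit consequence of the parametrisation in Theorem~\ref{thm:half-flat-param} and the dictionary of Table~\ref{tab:comp-inv-cov} --- so the comparison is with that implicit route: write \( \gamma=ae^{135}+be^{246}+d\beta \), compute Hitchin's quartic invariant and \( d\hat\gamma \) directly, and translate into matrices. Your treatment of the second half of the lemma is a genuinely different and attractive alternative: instead of computing \( \hat\gamma \) explicitly, you invoke Hitchin's first-variation formula for the volume functional, integrate by parts on the compact group, and identify \( \hat R \) with the trace-form gradient of \( \sqrt{-\lambda(c,Q)} \) on \( S^2_0(\bR^4) \). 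This meshes perfectly with the Hamiltonian \( H_c(Q,P)=\sqrt{-\lambda(c,Q)}-\tfrac1{12}\tr(P^3) \) appearing in Section~\ref{sec:flow}, and your gradient algebra is correct: by Cayley--Hamilton and Newton's identities for trace-free \( 4\times4 \) matrices, \( \partial_Q\det(Q)=(\Adj Q)_0=-(Q^3)_0+\tfrac12\tr(Q^2)Q \), so \( \partial_Q(4r) \) reproduces the stated \( R \), and the \( a+b=0 \) specialisation \( R=(\Adj\hat Q)_0 \), \( 4r=\det\hat Q \) follows by the same computation applied to \( \hat Q=Q+aI \).

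The genuine gap is that the formula \( 4r=\lambda(c,Q)=\det(Q)+\tfrac{a-b}6\tr(Q^3)+\tfrac{ab}2\tr(Q^2)+(ab)^2 \) --- half of the lemma's content, and the sole input to your variational argument --- is never established. You correctly flag this as ``the real work'' but only describe the computation (push Hitchin's quartic through \( U\cong V \)) without performing it, and the fallback you propose, checking against Examples~\ref{ex:W1W3} and~\ref{ex:W1W3s0}, cannot substitute for it: in both examples \( \beta \) is a multiple of \( e^{12}+e^{34}+e^{56} \), so \( Q \) is a multiple of \( \diag(-3,1,1,1) \), and on such matrices every quartic invariant (\( \det Q \), \( \tr(Q^4) \), \( (\tr Q^2)^2 \)) is the same multiple of the fourth power of the scale; moreover both examples have \( a+b=0 \), so terms involving \( a+b \) are invisible. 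Hence these checks cannot exclude precisely the a priori admissible terms (\( \tr(Q^4) \), \( (\tr Q^2)^2 \), \( (a^2+b^2)\tr(Q^2) \), \dots) that you yourself worry about. To close the proof you must actually evaluate \( \lambda \) on \( ae^{135}+be^{246}+d\beta \); by \( \SO(4) \)-equivariance it suffices to do this for \( Q \) diagonal with generic (distinct) eigenvalues. A smaller looseness of the same kind: the constant (and sign) relating \( \hat R \) to \( \partial_Q\sqrt{-\lambda} \) depends on the normalisation of the isomorphism in Lemma~\ref{lem:equiv-iso} and of the wedge pairing, and ``pinned down by \eqref{eq:comp-halfflat}'' is not an argument --- though here, since only one overall constant is at stake, a single generic example would legitimately fix it.
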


\begin{proposition}
  Let \( (Q,P)\in \mathcal H_c \):
  \begin{compactenum}
  \item if \( (Q,P) \) corresponds to a coupled structure then \( c=0
    \) and \( P=-\frac32\alpha Q \) for a non-zero constant \(
    \alpha\in\bR \).
  \item if \( (Q,P) \) corresponds to a co-coupled structure then \(
    \hat R=\alpha (P^2)_0 \) for a non-zero constant \( \alpha\in\bR
    \).
  \end{compactenum}
  \qed
\end{proposition}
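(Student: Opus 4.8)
The plan is to reduce each of the two items to a purely algebraic identity in \( V \) by feeding the differential characterisations of Proposition~\ref{prop:intr_space} into the matrix dictionary assembled in this section. In both cases the geometric hypothesis fixes the \( \mathcal W \)-components of the intrinsic torsion, Proposition~\ref{prop:intr_space} rewrites \( d\omega \) or \( d\hat\gamma \) as a single term, and the dictionary converts that term into the asserted relation between \( P \), \( Q \) and \( \hat R \).

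For (1) I would start from the coupled hypothesis \( \mathcal W_3=0 \). Proposition~\ref{prop:intr_space} supplies both \( d\omega=-\tfrac32 w_1\gamma \) and \( [\gamma]=0\in H^3(M) \); the latter is precisely the assertion \( c=0 \), so that half the statement is immediate. To extract the proportionality I would use that, by the proof of Theorem~\ref{thm:half-flat-param}, \( \omega \) lies in \( A\otimes B \) and, once \( c=0 \), the closed form \( \gamma \) is purely of middle type, \( \gamma=d\beta \) with \( \beta\in A\otimes B \) the potential whose matrix is \( Q \); likewise \( \omega \) is a potential for the exact middle \( 3 \)-form \( d\omega \), whose matrix is \( P \). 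The point to record is that \( d\colon A\otimes B\to(\Lmb^2A\otimes B)\OP(A\otimes\Lmb^2 B) \) is injective — an \( \SO(4) \)-equivariant isomorphism onto its image, the exact middle \( 3 \)-forms, since \( de^1=e^{35} \) and its cyclic analogues carry a basis of \( A \) to a basis of \( \Lmb^2 A \). Hence \( d\omega=-\tfrac32 w_1\,d\beta \) forces \( \omega=-\tfrac32 w_1\beta \), that is \( P=-\tfrac32\alpha Q \) with \( \alpha=w_1 \); here \( \alpha\neq0 \), a coupled structure being by definition one for which this proportionality constant is non-zero (otherwise \( d\omega=0 \)).

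For (2) I would again invoke Proposition~\ref{prop:intr_space}: co-coupled means \( \mathcal W_2^-=0 \), hence \( d\hat\gamma=w_1\omega^2 \), and both sides are already tabulated. The left-hand side is handled by Lemma~\ref{lem:hatgamma}, according to which the \( 4 \)-form \( d\hat\gamma \) corresponds to \( \hat R \). For the right-hand side I would use that the squaring map \( \omega\mapsto\omega^2 \) is, in the \( 3\times3 \) picture, \( K\mapsto\Adj(K^T) \), which by the row \( -2\Adj(K^T)\leftrightarrow(S^2)_0 \) of Table~\ref{tab:comp-inv-cov} translates in \( V \) into \( \omega^2\leftrightarrow-\tfrac12(P^2)_0 \). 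Substituting yields \( \hat R=-\tfrac{w_1}2(P^2)_0=\alpha(P^2)_0 \) with \( \alpha=-\tfrac12 w_1 \), which is non-zero for the non-degenerate structures under consideration, namely those with \( w_1\neq0 \).

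I expect the only delicate step to be the bookkeeping behind (1): one must verify that \( d \) is injective on \( A\otimes B \) and, crucially, that \( P \) and \( Q \) are the matrices of the relevant potentials under one and the same identification \( A\otimes B\cong V \). Once this normalisation is fixed the constant in \( P=-\tfrac32\alpha Q \) is forced with no spurious scalar, the factor \( -\tfrac32 \) simply inheriting the coefficient of \( w_1 \) in the half-flat expression for \( d\omega \). Everything else is a direct substitution into the dictionary, which is why the statement can be left essentially to inspection.
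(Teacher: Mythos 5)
Your overall route---feeding Proposition~\ref{prop:intr_space} into the matrix dictionary (Lemma~\ref{lem:equiv-iso}, Table~\ref{tab:comp-inv-cov}, Lemma~\ref{lem:hatgamma})---is exactly what the paper intends; the paper itself states the proposition without proof, treating it as a direct consequence of that framework. Part (1) of your argument is complete: \( c=0 \) is quoted from Proposition~\ref{prop:intr_space}, and your key observation that \( d \) is injective on \( A\otimes B \) (because \( e^1\mapsto e^{35} \), etc., is an isomorphism \( A\to\Lambda^2A \)) correctly converts \( d\omega=-\tfrac32 w_1\,d\beta \) into \( \omega=-\tfrac32 w_1\beta \), i.e.\ \( P=-\tfrac32 w_1 Q \) under the common identification \( A\otimes B\cong V \). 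One correction of wording: \( \alpha\neq0 \) is not ``by definition'' of coupled (the paper defines coupled solely as \( \mathcal W_3=0 \)); the correct reason is the one hiding in your parenthesis: if \( w_1=0 \) then \( d\omega=0 \), and injectivity of \( d \) on \( A\otimes B \) would force \( \omega=0 \), contradicting stability.

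The genuine gap is in part (2), precisely at the step you wave away: the non-vanishing of \( \alpha \). Co-coupled gives \( d\hat\gamma=w_1\omega^2 \), and Lemma~\ref{lem:hatgamma} together with the table row \( -2\Adj(K^T)\leftrightarrow(S^2)_0 \) yields \( \hat R=\alpha(P^2)_0 \) with \( \alpha \) a fixed nonzero multiple of \( w_1 \); so far so good. But justifying \( \alpha\neq0 \) by restricting to ``structures with \( w_1\neq0 \)'' is circular: the proposition is asserted for \emph{every} co-coupled \( (Q,P)\in\mathcal H_c \), and since \( (P^2)_0\neq0 \) (as \( \omega^2\neq0 \) for stable \( \omega \)), the statement would actually be false for a co-coupled structure with \( w_1=0 \), should one exist. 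Unlike in (1), there is no soft argument excluding this case: \( w_1=0 \) would mean \( d\hat\gamma=0=d\gamma \), i.e.\ an invariant complex structure on \( M \) carrying an invariant holomorphic \( (3,0) \)-form. Ruling that out is exactly the content of the paper's subsequent lemma---in the notation of Lemma~\ref{lem:hatgamma}, \( R=0 \) forces \( r\geqslant0 \), whereas stability of \( \gamma \) requires \( r<0 \)---equivalently Schulte-Hengesbach's non-existence result. A complete proof of (2) must invoke this fact (or establish \( R\neq0 \) directly from the formula for \( R \) in Lemma~\ref{lem:hatgamma}); without it the ``non-zero constant'' claim is unsupported, and your closing remark that the only delicate point lies in (1) is misplaced.
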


\begin{example}
  Obviously, the half-flat pair \( (Q,P) \) is of type \( \mathcal
  W_1+\mathcal W_3 \) if and only if the matrices \( (P^2)_0 \) and \(
  R \) are proportional, i.e., we have \( \hat R=\alpha (P^2)_0 \);
  the type does not reduce further provided \( c\neq0 \) and \(
  \alpha\neq0 \). Using these conditions it is easy to show that the
  structures of Example \ref{ex:W1W3} and Example \ref{ex:W1W3s0} have
  the type of intrinsic torsion claimed. Indeed, in the first example,
  using Lemma \ref{lem:hatgamma}, we find that
  \begin{equation*}
    (P^2)_0=\frac{9a^2\alpha^2}{8}\diag(3,-1,-1,-1),\quad R=\frac{9a^3}{8}\diag(3,-1,-1,-1),
  \end{equation*}
  whilst the matrices of the second example satisfy
  \begin{equation*}
    (P^2)_0=2a^2\diag(3,-1,-1,-1),\quad R=(\frac12\sqrt{5}a^2 b + 6b^3)\diag(3,-1,-1,-1).
  \end{equation*}
\end{example}

\begin{example}[Nearly-K\"ahler]
  \label{ex:nK}
  In this case, the following conditions should be satisfied:
  \begin{equation*}
    \begin{cases}
      P=-\frac32\alpha Q\equiv-\frac32\alpha\diag(-x-y-z,x,y,z),\\
      4\Adj(Q)_0=\sqrt{-\det(Q)}\alpha(P^2)_0=\frac94\alpha^3\sqrt{-\det(Q)}(Q^2)_0,
    \end{cases}
  \end{equation*}
  for some \( \alpha\in\bR^* \). This is equivalent to solving the
  equations
  \[ (Q^2)_0=\tilde \alpha\left((Q^3)_0-\frac12\tr(Q^2)Q\right), \]
  where \( \tilde\alpha=-\frac{16}{9\alpha^3\sqrt{-\det Q}} \). We
  find this system of equations can be formulated as
  \begin{equation*}
    \begin{cases}
      (y+z)(2x+y+z)=- \tilde\alpha yz(2x+y+z),\\
      (x+z)(x+2y+z)=-\tilde\alpha xz(x+2y+z),\\
      (x+y)(x+y+2z)=- \tilde\alpha xy(x+y+2z).
    \end{cases}
  \end{equation*}
  Keeping in mind that we must have \( (x+y+z)xy>0 \), we obtain only
  the following solutions \( (Q,P)\in \mathcal H_0 \):
  \begin{equation*}
    \begin{split}
      x&=y=z=\frac8{9\sqrt{3}\alpha^3}, \\
      -\frac13x&=y=z=\frac8{9\sqrt{3}\alpha^3} \quad\textrm{ or with
        the roles of } x,y,z \textrm{ interchanged}.
    \end{split}
  \end{equation*}
  Note that these solutions are identical after using a permutation;
  the corresponding matrices \( Q \) are of the form
  \[ \diag(-3x,x,x,x) \quad \textrm{and} \quad \diag(x,-3x,x,x),\]
  respectively.
\end{example}

The above example captures a well known fact about uniqueness of the
invariant nearly-K\"ahler structure on \( \SM \).  In our framework,
this can be summarised as follows (compare with \cite[Proposition
2.5]{Butruille:nK} and \cite[Proposition 1.11, Chapter 5]{Hengesbach:phd}).

\begin{proposition}
  \label{prop:nKunique}
  Modulo equivalence and up to a choice of scaling \( q\slash
  p\in\bR^* \), there is a unique invariant nearly-K\"ahler structure
  on \( M \). It is given by the class \( [(Q,P)] \) where
  \[ (Q,P)=(q(\diag(-3,1,1,1),p\diag(-3,1,1,1))\in \mathcal H_0. \]
  \qed
\end{proposition}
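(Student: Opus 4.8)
The plan is to read the statement off the analysis already carried out in Example~\ref{ex:nK}, organising it around the two conditions that define the nearly-K\"ahler class. By definition such a structure is simultaneously coupled (\( \mathcal W_3=0 \)) and co-coupled (\( \mathcal W_2^-=0 \)), so I would begin by feeding these into the preceding proposition characterising coupled and co-coupled pairs. Coupledness forces \( c=0 \), placing the structure on the single fibre \( \mathcal H_0 \), and gives \( P=-\frac32\alpha Q \) for some \( \alpha\in\bR^* \); co-coupledness gives \( \hat R=\alpha'(P^2)_0 \) for a constant \( \alpha' \). Thus from the outset the problem lives on \( \mathcal H_0 \) and reduces to a relation purely in \( Q \).

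Next I would convert the co-coupled relation into matrix form. Since \( c=0 \) we have \( a=b=0 \), so Lemma~\ref{lem:hatgamma} in its \( a+b=0 \) form (with \( \hat Q=Q \)) gives \( R=(\Adj Q)_0 \) and \( 4r=\det Q \); using the adjugate identity \( (\Adj Q)_0=-\bigl((Q^3)_0-\frac12\tr(Q^2)Q\bigr) \) for trace-free \( Q \), together with \( (P^2)_0=\frac94\alpha^2(Q^2)_0 \), the co-coupled equation becomes exactly
\[ (Q^2)_0=\tilde\alpha\bigl((Q^3)_0-\tfrac12\tr(Q^2)\,Q\bigr),\qquad \tilde\alpha=-\tfrac{16}{9\alpha^3\sqrt{-\det Q}}, \]
as in Example~\ref{ex:nK}. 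As \( Q \) is symmetric I may diagonalise it, writing \( Q=\diag(-x-y-z,\,x,\,y,\,z) \), and the identity then becomes the three scalar equations recorded there, each factoring through one of \( 2x+y+z \), \( x+2y+z \), \( x+y+2z \).

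The core of the argument is to solve this cubic system subject to the positivity constraint \( (x+y+z)xy>0 \) coming from \( \omega(\cdot,J\cdot)>0 \). I would split into the branch where all three factors are non-zero — cancelling them reduces the system to \( x+y=-\tilde\alpha xy \) and its cyclic companions, whose pairwise differences give \( (x-y)(1+\tilde\alpha z)=0 \), and so on, forcing either \( x=y=z \) or a relation of the form \( \tilde\alpha z=-1 \) that yields the pattern \( -\frac13x=y=z \) — and the degenerate branches where one factor vanishes, which I expect either to violate positivity or to collapse onto the former. This leaves precisely \( Q\sim\diag(-3,1,1,1) \) and \( Q\sim\diag(1,-3,1,1) \) up to scale, the solutions listed in Example~\ref{ex:nK}.

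Finally I would show that these two normal forms name the \emph{same} point of the moduli space, so uniqueness holds modulo equivalence. Under the isomorphism \( V\cong U \) of Lemma~\ref{lem:equiv-iso} both \( \diag(-3,1,1,1) \) and \( \diag(1,-3,1,1) \) correspond to diagonal \( 3\times3 \) matrices with the same singular values \( (1,1,1) \), namely \( I_3 \) and \( \diag(1,-1,-1) \); these differ by \( \diag(1,-1,-1)\in\SO(3) \) acting on one side, hence lie in a single \( \SO(4) \)-orbit and represent one point of \( \mu^{-1}(0)/\SO(4)\cong(\bR^3\OP\bR^3)/S_3 \) from Corollary~\ref{cor:param}. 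Since \( P=-\frac32\alpha Q \) throughout, the same holds for the pair \( (Q,P) \), and rescaling is exactly the permitted freedom \( q/p \). Thus \( [(Q,P)] \) with \( (Q,P)=(q\diag(-3,1,1,1),\,p\diag(-3,1,1,1)) \) is the unique nearly-K\"ahler class. I expect the main obstacle to be the careful pruning of the degenerate branches of the cubic system, confirming that no extraneous positive-definite solution survives; recognising the surviving solutions as a single orbit is then a short singular-value computation.
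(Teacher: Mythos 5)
Your overall strategy coincides with the paper's: Proposition~\ref{prop:nKunique} carries a \( \square \) precisely because its proof is the computation of Example~\ref{ex:nK} — coupledness forces \( c=0 \) and \( P=-\frac32\alpha Q \), co-coupledness becomes \( (Q^2)_0=\tilde\alpha\bigl((Q^3)_0-\frac12\tr(Q^2)Q\bigr) \) via Lemma~\ref{lem:hatgamma}, one diagonalises \( Q=\diag(-x-y-z,x,y,z) \), solves the three factored cubic equations subject to \( (x+y+z)xy>0 \), and observes that the surviving solutions \( \diag(-3,1,1,1) \) and \( \diag(1,-3,1,1) \) are identified by a permutation, i.e.\ by the group action. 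Your reduction, your system, and your final one-orbit identification (which is a slightly more explicit version of the paper's ``identical after using a permutation'') all match.

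There is, however, a concrete error in your case analysis, sitting exactly where you yourself locate the ``main obstacle''. In the branch where all three factors \( 2x+y+z \), \( x+2y+z \), \( x+y+2z \) are non-zero, the alternative \( \tilde\alpha z=-1 \) does \emph{not} yield the pattern \( -\frac13x=y=z \); it yields a contradiction: substituting \( -\tilde\alpha z=1 \) into \( y+z=-\tilde\alpha yz \) gives \( y+z=y \), hence \( z=0 \), incompatible with \( \tilde\alpha z=-1 \). So the non-degenerate branch produces only \( x=y=z \). The second family \( x=-3y,\ y=z \) lives precisely in the degenerate branch you propose to prune away: for it the two factors \( x+2y+z \) and \( x+y+2z \) vanish identically, the corresponding two equations are vacuous, and the surviving equation gives \( \tilde\alpha y=-2 \); moreover it respects positivity, since \( (x+y+z)xy=3y^3>0 \) for \( y>0 \). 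Executed literally, your plan would therefore ``derive'' the second normal form by an invalid deduction and discard the branch where it actually arises (risking, in a sloppier write-up, the false conclusion that \( x=y=z \) is the only solution). The repair is routine: organise the cases by how many of the three linear factors vanish; the one-factor and three-factor cases are empty (the same substitution trick forces \( Q \) to degenerate), the zero-factor case gives \( x=y=z \), and the two-factor case gives the three permuted copies of \( \diag(1,-3,1,1) \) — after which your singular-value argument via Lemma~\ref{lem:equiv-iso} and Corollary~\ref{cor:param} correctly collapses everything to the single class \( [(Q,P)] \) of the statement.
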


As observed in \cite[Proposition~1.8]{Hengesbach:phd} there are no
invariant (integrable) complex structures on \( M \) admitting a
left-invariant holomorphic \( (3,0) \)-form. Indeed, in terms of \(
4\times4\) matrices this assertion is captured by

\begin{lemma}
  In the notation of Lemma~\ref{lem:hatgamma}, if \( R=0 \) then \(
  r\geqslant0 \).  \qed
\end{lemma}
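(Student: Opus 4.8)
The plan is to convert the vanishing of the matrix $R$ into a rigid constraint on the spectrum of a single shifted matrix, and then read off $r\ge 0$ by inspection. Write $s=\tfrac{a-b}2$, $k=\tfrac{(a+b)^2}4\ge 0$ and $\hat Q=Q+sI$ (this reduces to the $\hat Q$ of Lemma~\ref{lem:hatgamma} when $a+b=0$). The enabling step is an algebraic identity expressing $R$ through an adjugate. Expanding $\Adj(Q+sI)$ as a cubic polynomial in $Q$ (via Cayley--Hamilton, using $\tr Q=0$ so that the first elementary symmetric function of $Q$ vanishes) and matching coefficients against the formula for $R$ in Lemma~\ref{lem:hatgamma}, I obtain
\[ R=(\Adj\hat Q)_0+k\,Q. \]
In the same computation I re-express the scalar $4r=\lambda(c,Q)$ in terms of $\hat Q$: the $\tr(Q^3)$ contributions cancel against the shift, leaving
\[ 4r=\det\hat Q+\tfrac{k}2\tr(\hat Q^2)+k^2-4ks^2. \]

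Next I exploit $R=0$. As $R$, $(\Adj\hat Q)_0$ and $Q$ are all trace-free, the identity upgrades to $\Adj\hat Q+k\hat Q=C\,I$ with scalar $C=\tfrac14\tr(\Adj\hat Q)+ks$. Multiplying by $\hat Q$ and using $\hat Q\,\Adj\hat Q=\det(\hat Q)\,I$ produces the quadratic relation
\[ k\,\hat Q^2-C\,\hat Q+\det(\hat Q)\,I=0. \]
When $a+b\ne0$, so $k>0$, this says the minimal polynomial of the symmetric matrix $\hat Q$ has degree at most two: $\hat Q$ has at most two distinct eigenvalues $\mu_+,\mu_-$, with multiplicities $p$ and $4-p$. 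I then substitute this eigenvalue pattern into the formula for $4r$, together with the Vieta relation $\det\hat Q=k\,\mu_+\mu_-$ coming from the product of the two roots. A direct computation yields a perfect square in each case, namely $4r=\tfrac14\mu_-^2(\mu_+-\mu_-)^2$, $4r=0$, $4r=\tfrac14\mu_+^2(\mu_+-\mu_-)^2$ for $p=1,2,3$, so $r\ge0$; the border cases $p\in\{0,4\}$ force $Q=0$ and $4r=(ab)^2\ge0$.

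The remaining regime $a+b=0$ is precisely the special case of Lemma~\ref{lem:hatgamma}, where $4r=\det\hat Q$ and $R=0$ asserts that the four products $\prod_{j\ne i}\mu_j$ coincide. Since these equal $\det(\hat Q)/\mu_i$ whenever $\mu_i\ne0$, either all $\mu_i$ agree or at least two of them vanish; in both situations $\det\hat Q\ge0$, so $r\ge0$ again.

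The genuine content is the reduction to a quadratic, after which the inequality is immediate. I expect the only real nuisance to be the degenerate spectra when $k>0$ and some $\mu_i=0$, so that $\det\hat Q=0$ and the Vieta relation no longer pins down $k$; here I would fall back on the trace constraint $\tr\hat Q=4s=2(a-b)$ together with the linear relation $\Adj\hat Q+k\hat Q=C\,I$ to show that such configurations collapse to $Q=0$, which is already covered. Verifying the adjugate identity and the cancellation in the formula for $4r$ is routine but must be carried out carefully, since the whole argument rests on them.
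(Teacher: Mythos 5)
Your proposal is correct, but there is no paper proof to compare it against: the lemma is stated with an immediate \qed, the text treating it as the matrix-algebra counterpart of the non-existence of invariant complex structures with invariant holomorphic \( (3,0) \)-forms cited from \cite[Proposition~1.8]{Hengesbach:phd}. Your argument therefore fills a gap the paper leaves open, and it does so entirely inside the paper's own matrix framework. I checked the two enabling identities, which are the real content: with \( s=\tfrac{a-b}2 \), \( k=\tfrac{(a+b)^2}4 \), \( \hat Q=Q+sI \), one has \( (\Adj\hat Q)_0=-(Q^3)_0+s(Q^2)_0+\bigl(\tfrac12\tr(Q^2)-s^2\bigr)Q \), so that \( R=(\Adj\hat Q)_0+(ab+s^2)Q=(\Adj\hat Q)_0+kQ \), and likewise \( \det\hat Q+\tfrac{k}2\tr(\hat Q^2)+k^2-4ks^2=\det Q+\tfrac{a-b}6\tr(Q^3)+\tfrac{ab}2\tr(Q^2)+(ab)^2=4r \); both extend the \( a+b=0 \) observation of Lemma~\ref{lem:hatgamma} to arbitrary \( (a,b) \). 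The passage from \( R=0 \) to \( k\hat Q^2-C\hat Q+\det(\hat Q)I=0 \), the two-eigenvalue analysis for \( k>0 \), and the separate \( k=0 \) argument are all sound. Two small corrections, neither affecting the conclusion: for \( p=2 \) one has identically \( 4r=(\mu_+\mu_--k)^2 \) (no Vieta relation needed), which equals \( k^2 \) rather than \( 0 \) in the degenerate subcase \( \mu_+\mu_-=0 \); and your fallback claim that degenerate spectra with some \( \mu_i=0 \) ``collapse to \( Q=0 \)'' is not literally true -- for \( p=1 \) with \( \mu_+=0 \), the sum-of-roots relation \( C=k(\mu_++\mu_-) \) forces \( k=\mu_-^2 \), so the configuration survives but is already covered by your perfect-square formula, giving \( 4r=\tfrac14\mu_-^4>0 \). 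Every branch yields \( r\geqslant0 \), so the proof is complete.
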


Although we have chosen to focus on the vector space \( V \) and \(
4\times4 \) matrices, we conclude this section with a neat consequence
of stability.  Consider \( K\in\bR^{3,3} \). The Cayley-Hamilton
theorem states that
\[ K^3-c_1K^2+c_2K-c_3I=0, \] where \( c_1=\tr K \), \(
\tr(K^2)=c_1^2-2c_2 \), and \( c_3=\det K \). Consider now the
adjugate
\[ \Adj K =K^2-c_1K+c_2I, \] so that \( K(\Adj K)=(\det K)I \).
Table~\ref{tab:comp-inv-cov} implies that the mapping \(
\omega\mapsto\omega^2 \) corresponds to a multiple of \( K\mapsto
\Adj(K^T) \). The following result describes a viable alternative to
the square root of a \( 3\times 3 \) matrix; it can be proved directly
using the singular value decomposition.

\begin{corollary} Any \( 3\times3 \) matrix with positive determinant
  equals \( \Adj K \) for some unique \( \pm K \).  \qed
\end{corollary}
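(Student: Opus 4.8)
The plan is to reduce everything to the two standard identities for the adjugate of a \( 3\times3 \) matrix,
\[ \Adj(\Adj K)=(\det K)\,K,\qquad \det(\Adj K)=(\det K)^2, \]
together with the scaling rule \( \Adj(cK)=c^2\,\Adj K \). These are the usual \( n\times n \) formulas specialised to \( n=3 \), and the first is exactly the operation already encoded in Table~\ref{tab:comp-inv-cov} via the correspondence \( \omega\mapsto\omega^2 \), i.e. \( K\mapsto\Adj(K^T) \). Granting them, both existence and uniqueness drop out by applying \( \Adj \) a second time, with no coordinate expansion required.

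I would treat uniqueness first. Suppose \( M=\Adj K \) with \( \det M>0 \). Then \( (\det K)^2=\det(\Adj K)=\det M>0 \), so \( K \) is invertible and \( \det K=\pm(\det M)^{1/2} \). Applying \( \Adj \) to \( M=\Adj K \) and invoking the double-adjugate identity gives \( \Adj M=(\det K)\,K \), whence
\[ K=\frac{1}{\det K}\,\Adj M=\pm\,(\det M)^{-1/2}\,\Adj M. \]
Thus every solution equals \( \pm K_0 \) with \( K_0:=(\det M)^{-1/2}\,\Adj M \); and since \( \det M>0 \) forces \( \Adj M=(\det M)M^{-1}\neq0 \), the two signs give distinct matrices, which is precisely the asserted uniqueness ``up to \( \pm K \)''.

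Existence then only needs the verification that \( K_0 \) works. Using the scaling rule and the double-adjugate identity,
\[ \Adj K_0=(\det M)^{-1}\,\Adj(\Adj M)=(\det M)^{-1}(\det M)\,M=M, \]
so \( M \) is indeed the adjugate of \( K_0 \) (and of \( -K_0 \)).

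I do not expect a genuine obstacle here; the only point needing care is that \( \Adj(\Adj K)=(\det K)K \) is a polynomial identity valid for all \( K \), so it may be quoted freely, and that positivity of \( \det M \) is exactly what makes \( (\det M)^{1/2} \) real and \( K_0 \) a bona fide real matrix. As the preceding remark suggests, one can instead argue through the singular value decomposition \( K=U\Sigma V^{T} \) with \( U,V \) orthogonal and \( \Sigma=\diag(\sigma_1,\sigma_2,\sigma_3) \): anti-multiplicativity yields \( \Adj K=(\det U\,\det V)\,V\,\Adj(\Sigma)\,U^{T} \) with \( \Adj(\Sigma)=\diag(\sigma_2\sigma_3,\sigma_1\sigma_3,\sigma_1\sigma_2) \), so the singular values of \( \Adj K \) are the pairwise products \( \sigma_i\sigma_j \). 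Positivity of \( \det M \) forces all \( \sigma_i>0 \), each \( \sigma_i \) is recovered uniquely as a positive square root of a ratio of singular values of \( M \), and the residual sign in matching the orthogonal factors produces exactly the \( \pm K \) ambiguity — the same conclusion, reached more laboriously.
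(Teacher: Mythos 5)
Your proof is correct, but your primary argument is genuinely different from the route the paper indicates. The paper's corollary is stated with the remark that ``it can be proved directly using the singular value decomposition'': writing \( K=U\Sigma V^T \), the adjugate \( \Adj K \) has singular values equal to the pairwise products \( \sigma_i\sigma_j \), and one recovers each \( \sigma_i \) (and the orthogonal factors up to a common sign) from the target matrix --- which is essentially the sketch you append at the end. Your main proof instead rests on the degree-three polynomial identities
\[ \Adj(\Adj K)=(\det K)\,K,\qquad \det(\Adj K)=(\det K)^2,\qquad \Adj(cK)=c^2\Adj K, \]
and this buys something the SVD route does not: an explicit closed-form inverse \( K_0=(\det M)^{-1/2}\Adj M \), with uniqueness up to sign falling out of the same computation (any solution \( K \) satisfies \( (\det K)K=\Adj M \) with \( \det K=\pm(\det M)^{1/2} \)). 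It is also purely algebraic --- no appeal to orthogonal diagonalisation --- and it isolates exactly where positivity of the determinant enters, namely in taking a real square root; indeed it shows as a by-product that a matrix with negative determinant is never an adjugate, since \( \det(\Adj K)=(\det K)^2\geqslant0 \). The paper's SVD argument, by contrast, is more geometric and makes visible how the singular values transform, at the cost of case-handling with the orthogonal factors. Both are complete proofs; yours is the more economical, and your verification that \( \pm K_0 \) are distinct (because \( \Adj M=(\det M)M^{-1}\neq0 \)) correctly pins down the meaning of ``unique \( \pm K \)''.
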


\section{Evolution equations: from \boldmath{\(\SU(3)\)} to
  \boldmath{\(\GT\)}}
\label{sec:flow}

Let \( I\subset\bR \) be an interval. A \( \GT \)-structure and metric
on the \( 7 \)-manifold \( M\times I \) can be constructed from a
one-parameter family of half-flat structures on \( M \) by setting
\begin{equation}
  \label{eq:G2str}
  \begin{cases}
    \varphi=\omega(t)\wedge dt+\gamma(t),\\
    \Hodge\varphi=\hat\gamma(t)\wedge dt+\frac12\delta(t),
  \end{cases}
\end{equation}
where \( \delta(t)=\omega(t)^2 \) and \( t\in I \). It is well known
\cite{Fernandez-G:G2} the holonomy lies in \( \GT \) if and only if \(
d\varphi=0=d\Hodge\varphi \). For structures defined via a
one-parameter family of half-flat structures, this can be phrased
equivalently as:
 
\begin{proposition}
  The Riemannian metric associated with the \( \GT \)-structure
  \eqref{eq:G2str} has holonomy in \( \GT \) if and only if the family
  of forms satisfies the equations:
  \begin{equation}
    \label{eq:G2flow}
    \begin{cases}
      \gamma'=d\omega,\\
      \delta'=-2d\hat\gamma.
    \end{cases}
  \end{equation}
\end{proposition}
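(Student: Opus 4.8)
The plan is to exploit the defining relations \eqref{eq:G2str} and simply impose the two closure conditions \( d\varphi = 0 \) and \( d\Hodge\varphi = 0 \), reading off what each says about the \( t \)-dependent forms on \( M \). The key structural observation is that the exterior derivative on \( M \times I \) splits as \( d = d_M + dt \wedge \partial_t \), where \( d_M \) acts on the \( M \)-factor (and kills \( dt \)), and \( \partial_t \) denotes differentiation of the coefficient forms in \( t \). Since \( \omega, \gamma, \hat\gamma, \delta \) are all pulled back from \( M \) at each fixed time, applying this splitting will separate each closure equation into a part containing no \( dt \) and a part containing exactly one \( dt \).

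First I would compute \( d\varphi \). From \( \varphi = \omega \wedge dt + \gamma \) we get
\begin{equation*}
  d\varphi = d_M\omega \wedge dt + (\partial_t\gamma)\wedge dt + d_M\gamma,
\end{equation*}
using \( d(\omega \wedge dt) = d_M\omega \wedge dt \) (the sign working out since \( \omega \) is a \( 2 \)-form, and \( dt \wedge dt = 0 \)). The \( dt \)-free part is \( d_M\gamma \), which must vanish; this recovers exactly \( d\gamma = 0 \), the half-flat condition already built into the family, so it imposes nothing new. The part proportional to \( dt \) is \( (d_M\omega - \gamma')\wedge dt \), whose vanishing is equivalent to \( \gamma' = d\omega \), the first evolution equation. (Here I write \( \gamma' \) for \( \partial_t\gamma \) and suppress the subscript on \( d_M \) when acting on forms on \( M \), matching the paper's convention.)

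Next I would treat \( d\Hodge\varphi \) in the same way. From \( \Hodge\varphi = \hat\gamma \wedge dt + \tfrac12\delta \) one finds
\begin{equation*}
  d\Hodge\varphi = d_M\hat\gamma \wedge dt + \tfrac12(\partial_t\delta)\wedge dt + \tfrac12 d_M\delta.
\end{equation*}
The \( dt \)-free component is \( \tfrac12 d_M\delta = \tfrac12 d(\omega^2) \), and its vanishing is the remaining half-flat condition \( d(\omega^2)=0 \) — again automatically satisfied by the family, hence no new content. The coefficient of \( dt \) is \( \bigl(d_M\hat\gamma + \tfrac12\delta'\bigr)\wedge dt \), so requiring it to vanish gives \( \delta' = -2\,d\hat\gamma \), the second evolution equation. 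Combining both computations shows \( d\varphi = 0 = d\Hodge\varphi \) holds precisely when system \eqref{eq:G2flow} holds, and by the cited result of Fernández--Gray \cite{Fernandez-G:G2} this is equivalent to holonomy in \( \GT \).

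I do not anticipate a genuine obstacle here: the argument is a bookkeeping exercise once the splitting \( d = d_M + dt \wedge \partial_t \) is in place. The one point requiring a little care is sign conventions — the factor of \( \tfrac12 \) on \( \delta \) in \eqref{eq:G2str} and the sign \( -2 \) in \eqref{eq:G2flow} must be tracked consistently through the Leibniz rule, and one should confirm that the \( dt \)-free parts reproduce the half-flat conditions (rather than some stronger constraint) so that the only new information is the pair of flow equations. I would also note explicitly that closedness of \( \varphi \) and co-closedness of \( \Hodge\varphi \) together are the correct torsion-free condition being invoked, so that no integrability subtlety is hidden in the passage from \( \SU(3) \) data on each slice to the \( \GT \)-structure on \( M \times I \).
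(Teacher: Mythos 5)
Your proposal is correct and takes essentially the same route as the paper's proof: split the exterior derivative on \( M\times I \) as \( d = d_M + dt\wedge\partial_t \), note that the \( dt \)-free parts of \( d\varphi \) and \( d\Hodge\varphi \) reproduce exactly the half-flat conditions \( d\gamma=0=d\delta \) (so impose nothing new), and read the two evolution equations off the \( dt \)-components. The only blemish is a sign slip in your displayed formula for \( d\varphi \): since \( \partial_t\gamma \) has odd degree, \( dt\wedge\partial_t\gamma = -(\partial_t\gamma)\wedge dt \), so that term should carry a minus sign (as in the paper's \( d\varphi = d\omega\wedge dt + d\gamma - \gamma'\wedge dt \)); your next sentence, which correctly identifies the \( dt \)-coefficient as \( d_M\omega-\gamma' \) and concludes \( \gamma'=d\omega \), silently makes this correction.
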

\begin{proof}
  Differentiation of \( \varphi \) and \( \Hodge\varphi \) gives us:
  \begin{equation*}
    \begin{cases}
      d\varphi=d\omega\wedge dt+d\gamma-\gamma'\wedge dt,\\
      d\Hodge\varphi=d\hat\gamma\wedge dt+\frac12d\delta+\delta'\wedge
      dt,
    \end{cases}
  \end{equation*}
  Since the one-parameter family consists of half-flat \( \SU(3)
  \)-structures, we have \( d\gamma=0=d\delta \) (for each fixed \( t
  \)), so the conditions \( d\varphi=0=d\Hodge\varphi \) reduce to the
  system \eqref{eq:G2flow}.
\end{proof}

\begin{remark}
  As explained in \cite[Theorem 8]{Hitchin:stable}, the evolution
  equations \eqref{eq:G2flow} can be viewed as the flow of a
  Hamiltonian vector field on \(
  \Omega^3_{ex}(M)\times\Omega^4_{ex}(M) \).  It is a remarkable fact
  that this flow does not only preserve the closure of \( \delta \)
  and \( \gamma \), but also the compatibility conditions
  \eqref{eq:comp-halfflat}.
\end{remark}

\begin{remark}
  In order to show that a given \( \GT \)-metric on \( M\times I \)
  has holonomy equal to \( \GT \), one must show there are no non-zero
  parallel \( 1 \)-forms on the \( 7 \)-manifold (see the treatment by
  Bryant and the second author \cite[Theorem
  2]{Bryant-S:exceptional}). For many of the metrics constructed in
  this paper, the argument is the same, or a variation of, the one
  applied in \cite[Section 3]{Bryant-S:exceptional}.
\end{remark}

In terms of matrices \( (Q,P)\in \mathcal H_c \), we can rephrase the
flow equations by

\begin{proposition}
  \label{prop:G2flow-matr}
  As a flow, \( t\mapsto (Q(t),P(t)) \), in \( \mathcal H_c \), the
  evolution equations \eqref{eq:G2flow} take the form
  \begin{equation}
    \label{eq:G2flow-matr}
    \begin{cases}
      Q'=P,\\
      (P^2)'_0=-2\hat R.
    \end{cases}
  \end{equation}
  \qed
\end{proposition}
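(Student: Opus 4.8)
The plan is to translate the two evolution equations \eqref{eq:G2flow} into the matrix language set up in Section~\ref{sec:para}, component by component. The key dictionary facts are already in place: by Theorem~\ref{thm:half-flat-param} the $2$-form $\omega$ corresponds (via $A\otimes B\cong U\cong V$ of Lemma~\ref{lem:equiv-iso}) to the symmetric matrix $P$, the $3$-form $\gamma$ to $Q$, and by Lemma~\ref{lem:hatgamma} the $4$-form $d\hat\gamma$ corresponds to $\hat R=\frac1{\sqrt{-r}}R$. It remains to identify what $\delta=\omega^2$ and $d\omega$ correspond to. For the first, the remark at the close of Section~\ref{sec:para} notes that $\omega\mapsto\omega^2$ corresponds to a multiple of $K\mapsto\Adj(K^T)$ on $3\times3$ matrices; under the isomorphism $U\cong V$ of Table~\ref{tab:comp-inv-cov} the relevant covariant of $P\in V$ attached to $\delta$ is $(P^2)_0$ (compare the row $-2\Adj(K^T)\leftrightarrow(S^2)_0$). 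So I would first establish that, as an element of the appropriate $\SO(4)$-module, $\delta=\omega^2$ is represented by (a fixed multiple of) $(P^2)_0$.

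Next I would treat the first equation $\gamma'=d\omega$. Since $\gamma\leftrightarrow Q$, the left-hand side is simply $Q'$. For the right-hand side, $d\omega$ is a $3$-form, and I would show it corresponds to $P$ itself: the exterior derivative on $A\oplus B$ is, by the structure equations \eqref{eq:stdbasis}, an $\SO(4)$-equivariant map sending the $2$-form class of $\omega$ into the $3$-form space, and under the identifications it must be (up to the normalisation implicit in our conventions) the identity $P\mapsto P$. Matching these gives $Q'=P$. I would be careful here that the half-flat condition $d\gamma=0$ makes $Q$ the correct closed representative, so that differentiating in $t$ is unambiguous.

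For the second equation $\delta'=-2\,d\hat\gamma$, the left-hand side is the $t$-derivative of the representative of $\delta$, i.e.\ $(P^2)'_0$ (using the first step), while the right-hand side is $-2$ times the representative $\hat R$ of $d\hat\gamma$ supplied by Lemma~\ref{lem:hatgamma}. This yields $(P^2)'_0=-2\hat R$ directly. The main obstacle I anticipate is purely one of \emph{normalisation constants}: each of the correspondences $\omega\leftrightarrow P$, $\omega^2\leftrightarrow(P^2)_0$, $d\omega\leftrightarrow P$, $d\hat\gamma\leftrightarrow\hat R$ carries a numerical factor coming from the choices in \eqref{eq:comp-halfflat}, Lemma~\ref{lem:equiv-iso}, and Table~\ref{tab:comp-inv-cov}, and I must verify that these factors conspire so that no stray constants survive in \eqref{eq:G2flow-matr}. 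I would pin them down by testing against an explicit case — the nearly-K\"ahler solution of Proposition~\ref{prop:nKunique}, or Example~\ref{ex:W1W3}, where $P=-\tfrac32\alpha Q$, $(P^2)_0$ and $R$ are the explicit diagonal matrices computed after Lemma~\ref{lem:hatgamma} — thereby fixing the scale of each identification and confirming that the clean form $Q'=P$, $(P^2)'_0=-2\hat R$ emerges. Finally I would note that the flow stays in $\mathcal H_c$ because \eqref{eq:G2flow} preserves closure and the compatibility conditions, as recorded in the remark following the previous proposition.
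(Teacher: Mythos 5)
Your proposal is correct and coincides with the argument the paper leaves implicit (the proposition is stated with an immediate \qed{} precisely because it follows from the dictionary already established in Theorem~\ref{thm:half-flat-param}, Table~\ref{tab:comp-inv-cov}, and Lemma~\ref{lem:hatgamma}), namely translating each side of \eqref{eq:G2flow} through the matrix correspondences. One simplification you could make: since \( Q \) is by definition the matrix of the potential \( \beta \) in \( \gamma=ae^{135}+d\beta+be^{246} \), the identification of exact invariant \( 3 \)-forms with \( V \) is via \( d \) itself (which is injective on \( A\otimes B \)), so \( d\omega\leftrightarrow P \) is tautological and \( Q'=P \) carries no normalisation ambiguity — your example-checking step is only genuinely needed to calibrate \( \delta\leftrightarrow(P^2)_0 \) against the convention for \( d\hat\gamma\leftrightarrow\hat R \) in Lemma~\ref{lem:hatgamma}.
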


These equations are particularly simple when the cohomology class \(
c=(a,b) \) of \( \gamma \) satisfies the criterion \( a+b=0 \). In
this case, by Lemma \ref{lem:hatgamma}, we have:

\begin{corollary}\label{cor:flow}
  For a flow, \( t\mapsto (Q(t),P(t)) \), in \( \mathcal H_{(a,b)} \)
  with \( a+b=0 \), the equations \eqref{eq:G2flow-matr} take the
  form:
  \begin{equation*}
    \begin{cases}
      Q'=P,\\
      (P^2)'_0=-\frac{4\Adj(\hat Q)_0}{\sqrt{-\det \hat Q}}.
    \end{cases}
  \end{equation*}
  \qed
\end{corollary}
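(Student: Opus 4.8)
The plan is to derive this directly by specializing Proposition~\ref{prop:G2flow-matr}, whose conclusion already supplies the general matrix form of the flow, namely \( Q'=P \) together with \( (P^2)'_0=-2\hat R \). The first equation \( Q'=P \) is unaffected by the cohomological hypothesis and carries over verbatim, so all of the work lies in rewriting the right-hand side of the second equation under the assumption \( a+b=0 \).

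To this end I would invoke the special case of Lemma~\ref{lem:hatgamma}. Recall that in general \( \hat R=\frac{1}{\sqrt{-r}}R \), and that when \( a+b=0 \) and \( \hat Q:=Q+aI \) the lemma identifies the two ingredients explicitly as \( R=(\Adj(\hat Q))_0 \) and \( 4r=\det(\hat Q) \). The only genuine manipulation is to translate \( 4r=\det(\hat Q) \) into a formula for the scalar \( \sqrt{-r} \) appearing in the denominator of \( \hat R \); taking square roots gives \( \sqrt{-r}=\tfrac12\sqrt{-\det\hat Q} \), and here one must simply track carefully the factor of \( 2 \) coming from the coefficient in \( 4r \).

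Substituting these into \( \hat R=\frac{1}{\sqrt{-r}}R \) yields
\[
\hat R=\frac{(\Adj(\hat Q))_0}{\tfrac12\sqrt{-\det\hat Q}}=\frac{2(\Adj(\hat Q))_0}{\sqrt{-\det\hat Q}},
\]
and feeding this into \( (P^2)'_0=-2\hat R \) combines the two factors of \( 2 \) into the single coefficient \( 4 \), producing exactly the stated equation \( (P^2)'_0=-\frac{4\Adj(\hat Q)_0}{\sqrt{-\det\hat Q}} \). The main ``obstacle'' is therefore not conceptual: the content of the corollary is entirely inherited from Proposition~\ref{prop:G2flow-matr} and Lemma~\ref{lem:hatgamma}, and the only place one can slip is the arithmetic of the numerical constants, so I would double-check the reconciliation of the factor of \( 2 \) as the single nontrivial point.
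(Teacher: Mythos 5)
Your proposal is correct and follows exactly the paper's route: the corollary is stated there with \(\qed\) precisely because it is the immediate combination of Proposition~\ref{prop:G2flow-matr} with the \( a+b=0 \) case of Lemma~\ref{lem:hatgamma}, and your bookkeeping of the factor of \( 2 \) (from \( 4r=\det\hat Q \) giving \( \sqrt{-r}=\tfrac12\sqrt{-\det\hat Q} \), hence \( -2\hat R=-4\Adj(\hat Q)_0/\sqrt{-\det\hat Q} \)) is the whole content of the verification.
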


\begin{remark}
  When phrased as above, the preservation of the normalisation
  \eqref{eq:normMatr} essentially amounts to Jacobi's formula for the
  derivative of a determinant.
\end{remark}

Proposition \ref{prop:G2flow-matr} tells us that the \( \GT \)-metrics
on \( M\times I \) that arise from the flow of invariant half-flat
structures, can be interpreted as the lift of suitable paths \(
t\mapsto Q(t) \) to paths
\[ t\mapsto(Q(t),P(t))\in S^2_0(\bR^4)\times S^2_0(\bR^4)\cong
T^*(S^2_0(\bR^4)),\] and moreover these paths lie on level sets of the
(essentially Hamiltonian) functional
\[ H_c(Q,P)=\sqrt{-\lambda(c,Q)}-\frac1{12}\tr(P^3). \]

\begin{corollary}
  Let \( (Q,P) \) be a (normalised) solution of the flow equations
  \eqref{eq:G2flow-matr}. Then the trajectory \( (Q(t),P(t)) \) lies
  on the level set \( \{H_c=0\} \) inside the space \(
  (S^2_0(\bR^4))^2\cong T^*(S^2_0(\bR^4)) \).  \qed
\end{corollary}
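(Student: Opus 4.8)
The plan is to identify the level set \( \{H_c=0\} \) with the normalisation locus, and then to prove that \( H_c \) is a first integral of \eqref{eq:G2flow-matr}; the corollary follows at once from the two facts together. First I would observe, using Lemma~\ref{lem:hatgamma}, that the radicand in the normalisation condition \eqref{eq:normMatr} is precisely \( -\lambda(c,Q) \) (which is positive on \( \mathcal U_c \)), so that \eqref{eq:normMatr} reads \( \tfrac1{12}\tr(P^3)=\sqrt{-\lambda(c,Q)} \), i.e.\ \( H_c(Q,P)=0 \). Thus a normalised solution lies on \( \{H_c=0\} \) at the instant where it is normalised, and the real content of the statement is that this is preserved by the flow.

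To show \( \tfrac{d}{dt}H_c=0 \) along any solution of \eqref{eq:G2flow-matr}, I would differentiate the two summands separately. For \( \sqrt{-\lambda} \), Jacobi's formula \( \tfrac{d}{dt}\det Q=\tr(\Adj(Q)\,Q') \), the elementary relations \( \tfrac{d}{dt}\tr(Q^3)=3\tr(Q^2Q') \) and \( \tfrac{d}{dt}\tr(Q^2)=2\tr(QQ') \), and the equation \( Q'=P \) combine to give
\[ \tfrac{d}{dt}\lambda(c,Q)=\tr\!\Big(\big(\Adj(Q)+\tfrac{a-b}{2}Q^2+ab\,Q\big)P\Big). \]
Since \( P \) is trace-free, the bracketed matrix may be replaced by its trace-free part when paired with \( P \); invoking the Cayley--Hamilton identity \( \Adj(Q)=-Q^3+\tfrac12\tr(Q^2)Q+\tfrac13\tr(Q^3)I \) for trace-free \( Q \), this trace-free part is exactly the matrix \( R \) of Lemma~\ref{lem:hatgamma}. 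Hence \( \tfrac{d}{dt}\lambda=\tr(RP) \) and \( \tfrac{d}{dt}\sqrt{-\lambda}=-\tr(RP)/(2\sqrt{-\lambda}) \). For the second summand, \( \tfrac{d}{dt}\tr(P^3)=3\tr(P^2P') \), and the cyclic identity \( \tr\big((P^2)'P\big)=2\tr(P^2P') \) together with the trace-freeness of \( P \) let me substitute \( (P^2)'_0=-2\hat R \), yielding \( \tfrac{d}{dt}\tr(P^3)=-3\tr(\hat R P) \).

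Finally I would insert \( \hat R=R/\sqrt{-r} \) with \( 4r=\lambda \), so that \( \sqrt{-r}=\tfrac12\sqrt{-\lambda} \) and \( \tr(\hat R P)=2\tr(RP)/\sqrt{-\lambda} \); then
\[ \tfrac{d}{dt}\Big(\tfrac1{12}\tr(P^3)\Big)=-\tfrac14\tr(\hat R P)=-\frac{\tr(RP)}{2\sqrt{-\lambda}}, \]
which cancels the derivative of \( \sqrt{-\lambda} \) computed above, so \( \tfrac{d}{dt}H_c=0 \). I expect the one genuine obstacle to be the identity \( \tfrac{d}{dt}\lambda=\tr(RP) \), i.e.\ recognising \( R \) as the gradient of \( \lambda(c,Q) \) on trace-free symmetric matrices; this is exactly where Jacobi's formula enters (as foreshadowed in the remark after Corollary~\ref{cor:flow}) and where the trace-free bookkeeping, using \( \tr Q=\tr P=0 \), must be carried out carefully. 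With conservation established, a solution normalised at one time satisfies \( H_c=0 \) for all \( t \), which is the assertion.
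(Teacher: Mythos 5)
Your proposal is correct and matches the paper's intended argument: the corollary is stated there with an immediate \( \qed \) because, via Lemma~\ref{lem:hatgamma}, the normalisation condition \eqref{eq:normMatr} is literally the equation \( H_c(Q,P)=0 \), while its preservation along the flow is relegated to the remark following Corollary~\ref{cor:flow} that this ``essentially amounts to Jacobi's formula for the derivative of a determinant'' (equivalently, to Hitchin's theorem that the flow preserves the compatibility conditions). You have simply carried out that Jacobi-formula computation in full --- correctly, including the Cayley--Hamilton identification of the trace-free gradient of \( \lambda(c,Q) \) with the matrix \( R \) and the factor bookkeeping \( \hat R = 2R/\sqrt{-\lambda} \) --- which is precisely the verification the paper leaves implicit.
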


\paragraph{Dynamic examples of type \boldmath{\(\mathcal W_1+\mathcal W_3\)}.}
Rephrasing results of \cite{Brandhuber-al:G2}, we now consider the
one-parameter family of forms \( t\mapsto (\omega(t),\gamma(t)) \)
given by
\begin{equation*}
  \begin{cases}
    \omega(t)=-\frac32\alpha(t)x(t)(e^{12}+e^{34}+e^{56})\equiv-\frac32\alpha(t)x(t)\omega_0,\\
    \gamma(t)=x(t)d\omega_0+a(e^{135}-e^{246}).
  \end{cases}
\end{equation*}
 
In this case, we find that
\[ \lmb=(a-3x)(x+a)^3, \] and we shall assume \( 3x<a \) and \( x<-a
\), so as to ensure \( \lmb<0 \). Also note that
\begin{equation*}
  \begin{split}
    -\sqrt{-\lmb}\hat{\gamma}&=x(a+x)^2(e^{135}+e^{246})\\
    &\quad+(a-2x)(a+x)^2\left(e^{352}+e^{146}+e^{514}+e^{362}+e^{136}+e^{524}\right).
  \end{split}
\end{equation*}
In particular, the normalisation condition reads:
\begin{equation}
  \label{eq:norm-Hitchsol1}
  27\alpha^3x^3=4\sqrt{(3x-a)(x+a)^3}.
\end{equation} 
In order to solve the flow equations, we also need the \( 4 \)-form
\[ d\hat{\gamma}=\frac1{\sqrt{-\lmb}}x(x+a)^2\omega_0^2. \]

Based on the above expressions, the system \eqref{eq:G2flow} becomes:
\begin{equation*}
  \begin{cases}
    x'(t)=-\frac32\alpha(t)x(t),\\
    (\alpha^2x^2)'=-\frac{8}9x\sqrt{\frac{x+a}{3x-a}}.
  \end{cases}
\end{equation*}
These equations can be rewritten as a system of first order ODEs in \(
x \) and \( \alpha \):
\begin{equation*}
  \begin{cases}
    x'=-\frac32\alpha x\\
    \alpha'=\frac32\alpha^2-\frac49\frac1{\alpha
      x}\sqrt{\frac{x+a}{3x-a}}.
  \end{cases}
\end{equation*}
As we require the normalisation \eqref{eq:norm-Hitchsol1} to hold, we
cannot choose initial conditions \( (x_i,\alpha_i) \) freely.

After suitable reparametrization, we find the explicit solution:
\begin{equation}
  \begin{cases}
    x(s)=\frac13(4s^3+a),\\
    \alpha(s)=\frac{4s^2}{\sqrt{3}}\frac{\sqrt{1+as^{-3}}}{4s^3+a},
  \end{cases}
\end{equation}
where \( -\infty<s<\min\{0,-a^{\frac13}\} \), and
\[ t=-2\sqrt{3}\int\!\frac{ds}{\sqrt{1+as^{-3}}}.\]
  
Note that whilst \( x' \) is always non-zero, \( \alpha' \) can be
zero. Indeed, this happens if \( a \) is chosen such that the
quadratic equation
\[ x^2+2ax-a^2=0 \] has a solution \( x(s) \) for some \(
s<\min\{0,-a^{\frac13}\} \). This is the case for any non-zero \( a
\): if \( a>0 \) the solution is obtained for
\[ s=-a^{\frac13}(1+\frac34\sqrt{2})^{\frac13}, \] and if \( a<0 \)
the solution occurs when
\[ s=a^{\frac13}(-1+\frac34\sqrt{2})^{\frac13}. \]
     
Introducing \( A(t)=-\frac{(\alpha x)'}{\alpha x} \), we can express
the exterior derivatives of the defining forms via
\begin{equation}
  \begin{cases}
    \label{eq:flow13}
    d\omega=-\frac32A\gamma+\frac32\left(\alpha a(e^{135}-e^{246})+(A-\alpha)\gamma\right)\equiv-\frac32A\gamma+\beta,\\
    d\gamma=0,\\
    d\hat\gamma=A\omega^2.
  \end{cases}
\end{equation}
As \( \gamma\wedge\beta=0=\hat\gamma\wedge\beta \) and \(
\omega\wedge\beta=0 \), this implies that the constructed
one-parameter family of \( \mathrm{SU}(3) \)-structures consists of
members of type \( \mathcal W_1+\mathcal W_3 \).

The associated family of metrics takes the form
\[ g=-\frac{3\alpha
  x}{\sqrt{(3x-a)(x+a)}}\left(x\sum_{i=1}^6e^i\otimes
  e^i+\frac12(a-x)\sum_{i=1}^3(e^{2i-1}\otimes e^{2i}+e^{2i}\otimes
  e^{2i-1})\right), \] and has scalar curvature given by
\[ \mathpzc{s}=\frac{6(a^2-5x^2)}{\sqrt{(3x-a)^3(a+x)}}.\]
  
Zero scalar curvature is obtained for the solution which has \(
a=-(5+\sqrt{5}) \). Indeed, in this case the scalar curvature is zero
when \( s^3=\frac{1-\sqrt{5}}2 \).

Finally, let us remark that the associated \( \GT \)-metric is of the
form \( dt\otimes dt+g \), or, phrased more explicitly, in terms of
the parameter \( s \):
\begin{equation*}
  \begin{split}
    &\frac{12}{1+as^{-3}}ds\otimes
    ds+\frac{4s^2+as^{-1}}{\sqrt{3}}\sum_{i=1}^6e^i\otimes e^i -
    \frac{2s^2-as^{-1}}{\sqrt{3}}\sum_{i=1}^3(e^{2i-1}\otimes
    e^{2i}+e^{2i}\otimes e^{2i-1})=\\[5pt]
    &\frac{12}{1+as^{-3}}ds\otimes
    ds\\
    &\hskip20pt+\sum_{i=1}^3\left(\frac{s^2(1+as^{-3})}{\sqrt{3}}(e^{2i-1}+e^{2i})\otimes
      (e^{2i}+e^{2i-1}) + \sqrt{3}s^2(e^{2i-1}-e^{2i})\otimes
      (e^{2i}-e^{2i-1})\right).
  \end{split}
\end{equation*}
If \( a=0 \) this metric is conical whilst for \( a\neq0 \), the
metric is asymptotically conical: when \( |s|\to \infty \) it tends to
a cone metric
\[ 12ds^2+s^2\sum_{i=1}^3\left(\frac{1}{\sqrt{3}}(e^{2i-1}+e^{2i})\otimes
  (e^{2i}+e^{2i-1}) + \sqrt{3}(e^{2i-1}-e^{2i})\otimes(e^{2i}-e^{2i-1})\right) \]
over \( M \). In terms of the classification \cite{Dancer-W:painleve},
the metrics belong to the family (I).

In terms of the matrix framework, the one-parameter families of pairs
\( (Q,P) \) take the form:
\begin{equation*}
  Q=-x\diag(3,-1,-1,-1),\quad P=-\frac32\alpha x\diag(3,-1,-1,-1).
\end{equation*}
In particular, we get another way of verifying the co-coupled
condition:
\begin{equation*}
  (P^2)_0=\frac{9\alpha^2x^2}2\diag(3,-1,-1,-1),\quad R=x(a + x)^2\diag(3,-1,-1,-1). 
\end{equation*}

\section{Further examples}
\label{sec:further}

\paragraph{Metrics with \boldmath{\(\SU(2)^2\times \Delta
    \mathrm{U}(1)\ltimes \mathbb{Z}\slash2\)} symmetry.}

Following mainly \cite{Chong-al:G2contr}, we study examples that
relate our framework to certain constructions of \( \GT \)-metrics
appearing in the physics literature. Our starting point in a
one-parameter families half-flat pairs \( (\omega,\gamma) \) of the
form:
\begin{equation*}
  \begin{cases}
    \omega=p_1e^{12}+p_2e^{34}+p_3e^{56},\\
    \gamma=ae^{135}+be^{246}+q_1d(e^{12})+q_2d(e^{34})+q_3d(e^{56}).
  \end{cases}
\end{equation*}
Using the normalisation condition, we are able to express the
associated one-parameter family of metrics on \( M \) as follows:

\begin{equation}
  \begin{split} 
    \label{eq:diagmetr}
    g&=\frac{q_2q_3+aq_1}{p_2p_3}e^1\otimes e^1+\frac{q_2q_3-bq_1}{p_2p_3}e^2\otimes e^2+\frac{q_1^2-q_2^2-q_3^2-ab}{2p_2p_3}(e^1\otimes e^2{+}e^2\otimes e^1)\\
    &+\frac{q_1q_3+aq_2}{p_1p_3}e^3\otimes e^3+\frac{q_1q_3-bq_2}{p_1p_3}e^4\otimes e^4+\frac{q_2^2-q_1^2-q_3^2-ab}{2p_1p_3}(e^3\otimes e^4{+}e^4\otimes e^3)\\
    &+\frac{q_1q_2+aq_3}{p_1p_2}e^5\otimes
    e^5+\frac{q_1q_2-bq_3}{p_1p_2}e^6\otimes
    e^6+\frac{q_3^2-q_1^2-q_2^2-ab}{2p_1p_2}(e^5\otimes
    e^6{+}e^6\otimes e^5),
  \end{split}
\end{equation}
and the flow equations \eqref{eq:G2flow} read:
\begin{equation}
  \label{eq:flow-specialcase}
  \begin{cases}
    q_i'=p_i,\\
    (p_2p_3)'=\frac1{p_1p_2p_3}\left(-abq_1+(a-b)q_2q_3+q_1(q_2^2+q_3^2-q_1^2)\right),\quad\textrm{etc.}
  \end{cases}
\end{equation}

\begin{remark}
  Notice that the \( \mathbb{Z}\slash2 \) action which interchanges
  the two copies of \( S^3 \) preserves the metric \eqref{eq:diagmetr}
  provided the cohomology class \( [\gamma] \) is of the form \( a+b=0
  \), i.e., \( [\gamma]=(a,-a) \). The action interchanges metrics of
  half-flat structures with \( [\gamma]=(a,0) \) with those for which
  \( [\gamma]=(0,-a) \). The latter observation is related to the
  notion of a \emph{flop} \cite{Atiyah-al:flop}.
\end{remark}

\begin{remark}
  \label{rem:volgrowth}
  The quantity \( \sqrt{\det{g(t)}} \) can be viewed as the ratio of
  the volume of \( g(t) \) relative to a fixed background metric on \(
  \SM \). As expected, we find that
  \[ \sqrt{\det(g)}=2\sqrt{-\lmb}, \] where we have used that \(
  \tr(P^3)=-6\sqrt{-\lmb} \), by the normalisation condition
  \eqref{eq:normMatr}.
\end{remark}

A metric ansatz that has led to the discovery of new complete \( \GT
\)-metrics (see, for instance,
\cite{Brandhuber-al:G2,Cvetic-al:orientifolds}) can be expressed in
terms of the condition \( a+b=0 \). In this case, we find
\begin{equation}
  \begin{split} 
    \label{eq:triaxmetr}
    g&=\frac{q_2q_3+aq_1}{p_2p_3}(e^1\otimes e^1+e^2\otimes e^2)+\frac{q_1^2-q_2^2-q_3^2+a^2}{2p_2p_3}(e^1\otimes e^2{+}e^2\otimes e^1)\\
    &+\frac{q_1q_3+aq_2}{p_1p_3}(e^3\otimes e^3+e^4\otimes e^4)+\frac{q_2^2-q_1^2-q_3^2+a^2}{2p_1p_3}(e^3\otimes e^4{+}e^4\otimes e^3)\\
    &\frac{q_1q_2+aq_3}{p_1p_2}(e^5\otimes e^5+e^6\otimes e^6)+\frac{q_3^2-q_1^2-q_2^2+a^2}{2p_1p_2}(e^5\otimes e^6{+}e^6\otimes e^5)\\
    &=\sum_{i=1}^3a_i^2(e^{2i-1}-e^{2i})\otimes(e^{2i-1}-e^{2i})+b_i^2(e^{2i-1}+e^{2i})\otimes(e^{2i-1}+e^{2i}),
  \end{split}
\end{equation}
where
\begin{equation*}
  \begin{cases}
    \label{eq:rel-abqp}
    a_1^2+b_1^2=\frac{q_2q_3+aq_1}{p_2p_3},b_1^2-a_1^2=\frac{q_1^2-q_2^2-q_3^2+a^2}{2p_2p_3},\\
    a_2^2+b_2^2=\frac{q_1q_3+aq_2}{p_1p_3},b_2^2-a_2^2=\frac{q_2^2-q_1^2-q_3^2+a^2}{2p_1p_3},\\
    a_3^2+b_3^2=\frac{q_1q_2+aq_3}{p_1p_2},b_3^2-a_3^2=\frac{q_3^2-q_1^2-q_2^2+a^2}{2p_1p_2},
  \end{cases}
\end{equation*}
or, alternatively,

\begin{equation}
  \begin{cases}
    q_1=-a_1a_2a_3 - a_3b_1b_2 - a_2b_1b_3 + a_1b_2b_3,\\
    q_2=-a_1a_2a_3 - a_3b_1b_2 + a_2b_1b_3 - a_1b_2b_3,\\
    q_3=-a_1a_2a_3 + a_3b_1b_2 - a_2b_1b_3 - a_1b_2b_3\\
    p_2p_3=4a_2a_3b_2b_3,p_1p_3=4a_1a_3b_1b_3,p_1p_2=4a_1a_2b_1b_2,\\
    a=-b=a_1a_2a_3 - a_3b_1b_2 - a_2b_1b_3 - a_1b_2b_3.
  \end{cases}
\end{equation}
Note that, up to a sign, we have \( p_i=-2a_ib_i \).

Expressed in terms of the metric function \( a_i,b_i \), the flow
equations \eqref{eq:flow-specialcase} become:
\begin{equation*}
  \begin{cases}
    \label{eq:flow-specialcase-3axial}
    4a_1'= \frac{a_1^2}{a_3b_2} + \frac{a_1^2}{a_2b_3} - \frac{a_2}{b_3} - \frac{a_3}{b_2} - \frac{b_2}{a_3} - \frac{b_3}{a_2},\\
    4b_1'= \frac{b_1^2}{a_2a_3} - \frac{b_1^2}{b_2b_3} - \frac{a_2}{a_3} - \frac{a_3}{a_2} + \frac{b_2}{b_3} + \frac{b_3}{b_2},\\
    4a_2'= \frac{a_2^2}{a_3b_1} + \frac{a_2^2}{a_1b_3} - \frac{a_1}{b_3} - \frac{a_3}{b_1} - \frac{b_1}{a_3} -  \frac{b_3}{a1}, \\
    4b_2'= \frac{b_2^2}{a_1a_3} - \frac{b_2^2}{b_1b_3} -  \frac{a_1}{a_3} - \frac{a_3}{a_1} +\frac{b_1}{b_3} +  \frac{b_3}{b_1},\\
    4a_3'= \frac{a_3^2}{a_2b_1} + \frac{a_3^2}{a_1b_2} -  \frac{a_1}{b_2} - \frac{a_2}{b_1} -\frac{b_1}{a_2} -  \frac{b_2}{a_1}, \\
    4b_3'= \frac{b_3^2}{a_1a_2} - \frac{b_3^2}{b_1b_2} -
    \frac{a_1}{a_2} - \frac{a_2}{a_1} + \frac{b_1}{b_2} +
    \frac{b_2}{b_1}.
  \end{cases}
\end{equation*}

The complete metrics constructed by Brandhuber et al
\cite{Brandhuber-al:G2} arise as a further specialisation of this
system.  Indeed, if we take \( a_1=a_2\equiv a \) and \( b_1=b_2\equiv
b \) and set \( t=\int\frac{ds}{b_3} \), then the system
\eqref{eq:flow-specialcase-3axial} reads
\begin{equation*}
  \begin{cases}
    4\frac{\partial a}{\partial s}= \frac{a^2-a^2_3-b^2}{ba_3b_3} - \frac1{a},\\
    4\frac{\partial b}{\partial s}= \frac{b^2- a^2 -a^2_3}{aa_3b_3} + \frac1{b},\\
    2\frac{\partial a_3}{\partial s}= \frac{a_3^2 -a^2-b^2}{abb_3}, \\
    4\frac{\partial b_3}{\partial s}= \frac{b_3}{a^2} -
    \frac{b_3}{b^2},
  \end{cases}
\end{equation*}
which is the same as in \cite[Equation (3.1)]{Brandhuber-al:G2}, where
the authors find the following explicit holonomy \( \GT \)-metric:
\begin{equation}
  \label{eq:ABC}
  \begin{split}
    \frac{ds^2}{b_3^2}&+\frac{(s-\frac32)(s+\frac92)}{12}\left((e^1-e^2)\otimes(e^1-e^2)+(e^3-e^4)\otimes(e^3-e^4)\right)\\
    &+\frac{(s+\frac32)(s-\frac92)}{12}\left((e^1+e^2)\otimes(e^1+e^2)+(e^3+e^4)\otimes(e^3+e^4)\right)\\
    &+\frac{s^2}9(e^5-e^6)\otimes(e^5-e^6)+\frac{(s-\frac92)(s+\frac92)}{(s-\frac32)(s+\frac32)}(e^5+e^6)\otimes(e^5+e^6).
  \end{split}
\end{equation}
Asymptotically this is the metric of a circle bundle over a cone, in
short an \emph{ABC metric}. In terms of the classification
\cite{Dancer-W:painleve}, it belongs to the family (II).

\paragraph{Cohomogeneity one Ricci flat metrics.}
Any solution of \eqref{eq:G2flow} gives us a cohomogeneity one Ricci
flat metric on \( M\times I \). An important aspect of the
cohomogeneity one terminology is to bridge a gap between our framework
and the ``Lagrangian approach'' appearing in the physics literature
(see, e.g., \cite[Section 4]{Brandhuber-al:G2}). For example, consider
the metric \eqref{eq:triaxmetr} from the above example, assuming for
simplicity that \( a_1=a_2\equiv a \) and \( b_1=b_2\equiv b \). By
\cite{Eschenburg-W:cohom1}, we know that the shape operator \( L \) of
the principal orbit \( S^3\times S^3\subset I\times M \) satisfies the
equation \( g'=2g\circ L \). For the given metric, we find that
\[ L=\frac12\left(\begin{array}{cccccc}
    \frac{a'b+ab'}{ab} & \frac{ab'-a'b}{ab} & 0 & 0 & 0 & 0 \\
    \frac{ab'-a'b}{ab} & \frac{a'b+ab'}{ab} & 0 & 0 & 0 & 0 \\
    0 & 0 &  \frac{a'b+ab'}{ab} & \frac{ab'-a'b}{ab} & 0 & 0\\
    0 & 0 & \frac{ab'-a'b}{ab} & \frac{a'b+ab'}{ab} & 0 & 0\\
    0 & 0 & 0 & 0 & \frac{a'_3b_3+a_3b'_3}{a_3b_3} & \frac{a_3b'_3-a'_3b_3}{a_3b_3}\\
    0 & 0 & 0 & 0 & \frac{a_3b'_3-a'_3b_3}{a_3b_3} &
    \frac{a'_3b_3+a_3b'_3}{a_3b_3}
  \end{array}\right). \]
We also observe that
\begin{equation*}
  \begin{gathered}
    \tr(L)^2=\frac{(2a_3b_3ab'+2a_3b_3ba'+aba_3b'_3+abb_3a'_3)^2}{a^2b^2a_3^2b_3^2},\\
    \tr(L^2)=\frac{(2a_3^2b_3^2a^2{b'}^2+2a_3^2b_3^2b^2{a'}^2+a^2b^2a_3^2{b'}_3^2+a^2b^2b_3^2{a'}_3^2}{a^2b^2a_3^2b_3^2},\\
    \det(g)=64a^4b^4a_3^2b_3^2,\\
    \scal=-\frac18\frac{2a_3^4a^2b^2+a_3^2a^4b_3^2-8a^4b^2a_3^2+a_3^2b^4b_3^2-8b^4a^2a_3^2+2a^6b^2-4a^4b^4+2a^2b^6}{a^4b^4a_3^2}.
  \end{gathered}
\end{equation*}

In general, the Ricci flat condition can now be expressed as:
\begin{equation}
  \label{eq:cohom1-Rflat}
  L'+(\tr(L))L-\Ric=0,\quad \tr(L')+\tr(L^2)=0,
\end{equation}
combined with another equation expressing the Einstein condition for
mixed directions. If we take the trace of the first equation in
\eqref{eq:cohom1-Rflat}, and combine with the second one, we obtain
the following conservation law:
\[ (\tr(L))^2-\tr(L^2)-\scal=0. \]

As explained in \cite{Dancer-W:painleve}, the above system has a
Hamiltonian interpretation. It is this interpretation, in its Lagrangian
guise and phrased with the use of superpotentials, one frequently
encounters in the physics literature. In this setting, the kinetic and
potential energies are given by
\begin{equation*}
  T=\left((\tr(L))^2-\tr(L^2)\right)\sqrt{\det(g)},\quad V=-\scal\sqrt{\det(g)};
\end{equation*}
these definitions agree with those in \cite{Brandhuber-al:G2} up to a
multiple of \( \sqrt{\det(g)}=8a^2b^2a_3b_3 \).

In \cite{Dancer-W:superpot}, the authors provide a relevant
description of the superpotential; in classical terms this is a solution of a
time-independent Hamilton-Jacobi equation. In the concrete example,
the superpotential \( u \) can be viewed as a function of \( a_i,b_i
\). Concretely, we can take
\[
u=2\left(2a^3bb_3+2ab^3b_3-a^2a_3b_3^2+b^2a_3b_3^2+2aba_3^2b_3\right). \]
In terms of \( u \), the flow equations can then be expressed as
follows:
\[ \frac{\partial\vv{\alpha}}{\partial r}=G^{-1}\frac{\partial
  u}{\partial \vv{\alpha}}, \] where \(
\vv{\alpha}=(\ln(a),\ln(b),\ln(b_3),\ln(a_3))^T \) (assuming \(
a_i,b_i>0 \)), \( t=\int\!\!\sqrt{\det(g)}\,dr \) and
\[ G=\left(\begin{array}{cccc}2 & 4 & 2 & 2 \\
    4 & 2 & 2 & 2\\
    2 & 2 & 0 & 1 \\
    2 & 2 & 1 & 0\end{array}\right) .\] Finally, we remark that the
kinetic and potential terms can be expressed in the form
\[ \sqrt{\det(g)}T=\frac{\partial\vv{\alpha}}{\partial
  r}G\left(\frac{\partial\vv{\alpha}}{\partial r}\right)^T,\quad
\sqrt{\det(g)}V=-\frac{\partial u}{\partial
  \vv{\alpha}}G^{-1}\left(\frac{\partial u}{\partial
    \vv{\alpha}}\right)^T.\]

As a further specialisation, let us consider the case when \( a=0 \)
and \( a=a_3=\frac{t}{2\sqrt{3}} \), \( b=b_3=\frac{t}6 \); this is
the nearly-K\"ahler case. Then the shape operator is proportional to
the identity: \( L=t^{-1}I \), and the kinetic and potential terms are
\[ T=\frac{5\sqrt{3}t^4}{324},\quad V=-\frac{5\sqrt{3}t^4}{324},\]
respectively. So the total energy is zero \( T+V=0 \) for all
\(t>0\). The superpotential is the fifth oder polynomial
\[ u=\frac{13t^5}{216\sqrt{3}}.\]

\paragraph{Uniqueness: flowing along a line.}
In the case when \( (Q,P)\subset\mathcal H_0 \), the flow equations
\eqref{eq:G2flow-matr} turn out to have a unique (admissible) solution
satisfying for which \( Q \) belongs to a fixed one-dimensional
subspace.

\begin{proposition}
  \label{prop:NKunique}
  Assume \( t\mapsto(Q(t),P(t))\in \mathcal H_0 \) is a solution of
  \eqref{eq:G2flow-matr}. Then \( Q \) belongs to a fixed \( 1
  \)-dimensional subspace of \( S^2_0(\bR^4) \) if and only if the
  associated \( \GT \)-metric is the cone metric over \( \SM \)
  endowed with its nearly-K\"ahler structure.
\end{proposition}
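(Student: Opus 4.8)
The plan is to push everything through the matrix form of the flow in Proposition~\ref{prop:G2flow-matr}, using the algebraic description of the nearly-K\"ahler locus from Example~\ref{ex:nK} and the uniqueness in Proposition~\ref{prop:nKunique}. Since \( c=0 \), Lemma~\ref{lem:hatgamma} gives \( R=-(Q^3)_0+\tfrac12\tr(Q^2)Q \) and \( 4r=\det Q \), so the second equation of \eqref{eq:G2flow-matr} reads \( (P^2)'_0=-2\hat R=-2R/\sqrt{-r} \).

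For the forward direction I would assume \( Q(t)=f(t)Q_0 \) for a fixed \( Q_0\in S^2_0(\bR^4) \) and a scalar function \( f \). The equation \( Q'=P \) then forces \( P=f'Q_0 \), so \( P \) lies on the same line and the structure is coupled at each instant. Setting \( R_0=-(Q_0^3)_0+\tfrac12\tr(Q_0^2)Q_0 \), homogeneity gives \( R=f^3R_0 \) and \( \sqrt{-r}=\tfrac12 f^2\sqrt{-\det Q_0} \), so the second flow equation becomes the matrix identity
\[ 2f'f''\,(Q_0^2)_0=-\frac{4f}{\sqrt{-\det Q_0}}\,R_0. \]
In the admissible region \( \det Q_0<0 \), whence \( (Q_0^2)_0\neq0 \), and \( R_0\neq0 \) by the Lemma following Proposition~\ref{prop:nKunique}; since the scalar coefficient \( -4f/\sqrt{-\det Q_0} \) cannot vanish identically without the structure degenerating, the matrices \( (Q_0^2)_0 \) and \( R_0 \) must be proportional. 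Combined with \( P\propto Q \), proportionality of \( R_0 \) and \( (Q_0^2)_0 \) is exactly the co-coupled condition \( \hat R\propto(P^2)_0 \), so the structure is nearly-K\"ahler for every \( t \); Proposition~\ref{prop:nKunique} then forces \( Q_0\propto\diag(3,-1,-1,-1) \) up to equivalence.

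It then remains to identify the metric. With \( Q_0=\diag(3,-1,-1,-1) \) one computes \( (Q_0^2)_0=2Q_0 \) and \( R_0=-Q_0 \) (cf.\ Example~\ref{ex:nK}), and the normalisation condition \eqref{eq:normMatr}, here \( \tr(P^3)=12(-\det Q)^{1/2} \), reduces to \( (f')^3=\tfrac{\sqrt3}{2}f^2 \); integrating gives the cubic \( f(t)=C(t-t_0)^3 \), which also solves the displayed matrix ODE. Thus, after placing the origin at \( t_0 \), the form \( \gamma(t)=f(t)\gamma_0 \) scales like \( t^3 \) while \( P(t)=Q'(t) \), and hence \( \omega(t) \), scales like \( t^2 \). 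Because the almost-complex structure \( J \) determined by \( \gamma \) is invariant under rescaling \( \gamma \), the induced metric satisfies \( g(t)=\omega(t)(\cdot,J\cdot)=t^2g_0 \), so the \( \GT \)-metric \( dt^2+g(t)=dt^2+t^2g_0 \) is precisely the cone over \( (M,g_0) \), which carries the nearly-K\"ahler structure (in accordance with B\"ar's construction \cite{Baer:spinor}). Conversely, the \( \GT \)-cone over the nearly-K\"ahler \( \SM \) is given in the ansatz \eqref{eq:G2str} by \( \omega(t)=t^2\omega_0 \), \( \gamma(t)=t^3\gamma_0 \), whose matrix representative \( Q(t)=t^3Q_0 \) manifestly lies in the fixed line \( \bR Q_0 \), and one checks directly that this pair solves \eqref{eq:G2flow-matr}.

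I expect the main obstacle to be the forward direction: extracting the proportionality \( R_0\propto(Q_0^2)_0 \) cleanly from a single matrix ODE (being careful that the degenerate case \( f\equiv0 \) is genuinely excluded by the open conditions defining \( \mathcal U_c \)), and then recognising the scalar power-law solution as the conical scaling of the \emph{metric} rather than merely of the defining forms \( \omega,\gamma \). The latter hinges on the scale-invariance of \( J \), which is the step I would double-check most carefully.
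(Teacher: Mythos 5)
Your proof is correct, and its main (forward) direction takes a genuinely different route from the paper's. The paper proves that direction by brute force: it parametrises the line as \( Q(t)=U(t)\diag(-1-b-c,b,c,1) \), writes \eqref{eq:G2flow-matr} as three scalar ODEs, observes that their mutual consistency imposes a purely algebraic system in \( (b,c) \) with an auxiliary constant \( \kappa \), and enumerates the seven solutions of that system, discarding the inadmissible \( \kappa=0 \) ones and noting the rest are equivalent to the explicit cone solution \eqref{eq:NKcone-matr}. You instead use homogeneity (\( R=f^3R_0 \), \( \sqrt{-r}=\tfrac12f^2\sqrt{-\det Q_0} \)) to collapse the flow to the single matrix identity \( 2f'f''(Q_0^2)_0=-\tfrac{4f}{\sqrt{-\det Q_0}}R_0 \), read off \( R_0\propto(Q_0^2)_0 \) (legitimately: \( f\neq0 \) since \( \det Q<0 \) on \( \mathcal H_0 \), and \( R_0\neq0 \) by the unnumbered Lemma after Proposition~\ref{prop:nKunique}, so the right-hand side never vanishes), recognise this together with \( P=f'Q_0\propto Q \) as the coupled-plus-co-coupled, i.e.\ nearly-K\"ahler, condition at each instant, and only then invoke the static uniqueness of Proposition~\ref{prop:nKunique}, finishing by integrating the normalisation \( (f')^3=\tfrac{\sqrt3}{2}f^2 \) to get the conical scaling. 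In effect the paper's \( \kappa \)-system is the coordinate expression of your proportionality constraint, but your organisation buys three things: it is coordinate-free (no need to diagonalise \( Q_0 \) and scale an eigenvalue to \( 1 \), which the paper's ansatz tacitly assumes possible); it reuses the already-established algebra of Example~\ref{ex:nK} rather than solving a fresh system; and it makes explicit the conceptual point, left implicit in the paper, that flowing along a line forces the structure to be nearly-K\"ahler pointwise. Two cosmetic points to tighten: the constant \( C \) in \( f(t)=C(t-t_0)^3 \) is not free but fixed by the normalisation (\( C=1/(18\sqrt3) \) for \( Q_0=\diag(3,-1,-1,-1) \)), and you should note that \( f \) has constant sign on the interval of definition, so that the positive-rescaling invariance of \( J \) — which you rightly flag as the delicate step — indeed applies.
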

\begin{proof}
  It is easy to see that the solution of \eqref{eq:G2flow-matr} which
  corresponds to the cone metric over \( \SM \) (with its
  nearly-K\"ahler structure) is represented by
  \begin{equation}
    \begin{cases}
      \label{eq:NKcone-matr}
      (Q(t),P(t))=(q(t)\diag(-3,1,1,1),p(t)\diag(-3,1,1,1))\in\mathcal H_0,\\
      (q(t),p(t))=-\frac{t^2}{6\sqrt{3}}(\frac{t}3,1).
    \end{cases}
  \end{equation}
  So, in this case, \( Q \) indeed belongs to a fixed \( 1
  \)-dimensional subspace of \( S^2_0(\bR^4) \).

  Conversely, let us assume we are given a solution such that
  \[ Q(t)=U(t)\diag\left(-1-a-b,a,b,1\right). \] Then the system
  \eqref{eq:G2flow-matr} reads:

  \begin{equation*}
    \begin{cases}
      \left(1 +b+ c - b^2 + c^2  + bc\right)uu'=\frac{b (-1 + c)^2 + b^2 (1 + c) - 3 c (1 + c)}{\sqrt{b c (1 + b + c)}}U,\\
      \left(1 + b +c + b^2 - c^2 + b c\right)uu'=\frac{b^2 (-3 + c) + c (1 + c) + b (-3 - 2 c + c^2)}{\sqrt{b c (1 + b + c)}}U,\\
      \left(-1 + b +c + b^2 + c^2 + b c\right)uu'=\frac{b + b^2 + c -
        2 b c - 3 b^2 c + c^2 - 3 b c^2}{\sqrt{b c (1 + b + c)}}U.
    \end{cases}
  \end{equation*}
  These equations show that there is a purely algebraic constraint to
  having a solution:
  \begin{equation*}
    \begin{cases}
      1 +b+c- b^2 + c^2 + bc=\frac{b (-1 + c)^2 + b^2 (1 + c) - 3 c (1 + c)}{\sqrt{b c (1 + b + c)}}\kappa,\\
      1 + b +c+ b^2 -c^2+ b c=\frac{b^2 (-3 + c) + c (1 + c) + b (-3 - 2 c + c^2)}{\sqrt{b c (1 + b + c)}}\kappa,\\
      -1 + b+c + b^2 + c^2 + b c =\frac{b + b^2 + c - 2 b c - 3 b^2 c
        + c^2 - 3 b c^2}{\sqrt{b c (1 + b + c)}}\kappa,
    \end{cases}
  \end{equation*}
  where \( \kappa\in \bR \). Uniqueness of the ``nearly-K\"ahler
  cone'', as a flow solution, now follows by observing that these
  algebraic equations have the following set of solutions:
  \begin{equation*}
    \begin{gathered}
      (\kappa,b,c)=(0,-1,-1), (\kappa,b,c)=(0,1,-1), (\kappa,b,c)=(0,-1,1),\\
      (\kappa,b,c)=(\frac1{\sqrt3},-\frac13,-\frac13),
      (\kappa,b,c)=(-\sqrt3,1,-3),(\kappa,b,c)=(-\sqrt3,
      -3,1),\\(\kappa,b,c)=(-\sqrt3, 1,1).
    \end{gathered}
  \end{equation*} 
  The solutions with \( \kappa=0 \) are not ``admissible'' whilst the
  remaining solutions all result in one-parameter families of pairs
  equivalent to \eqref{eq:NKcone-matr}.
\end{proof}

\section{Numerical solutions}
\label{sec:num}

As indicated in the earlier parts of this paper, previous studies of
\( \GT \)-metrics on \( M\times I \) have focused mainly on metrics
with isometry group (at least) \(\SU(2)^2\times \Delta
\mathrm{U}(1)\ltimes \mathbb{Z}\slash2 \). In addition, most of the
attention has been centred around solutions in \( \mathcal H_c \) for
\( c=(a,-a)\neq0 \).

A technique that seems effective if one is specifically looking
for complete metrics is to choose the initial values of the flow
equations \eqref{eq:G2flow-matr} to obtain a singular orbit at that
point (meaning, in our context, one whose stabilizer has positive
dimension in \(\SU(2)^2\)). This approach was adopted in
\cite{Reidegeld:Spin7,Cvetic-al:G2-Spin7} for \(\mathrm{Spin}(7)\)
holonomy. However, this final section shifts the focus of our
investigation in order to illustrate some more generic behaviour of
the flow on the space of invariant half-flat structures on \(
S^3\times S^3 \).

\paragraph{Two-function ansatz.}
We first look for solutions in \( \mathcal H_0 \) for which \( Q \)
takes the form
\[ Q(t)=\diag(-2U(t)-V(t),U(t),U(t),V(t)),\] where \( U,V \) are
smooth functions on an interval \( I \subset\bR \). A solution of
\eqref{eq:G2flow-matr} is then uniquely specified by the quadruple
\[ (U(0),V(0),U'(0),V'(0)). \]

We have solved the system for a wide range of initial conditions. A
selection of solutions are shown in Figure \ref{fig:G2sol2D}. Apart
from the nearly-K\"ahler straight line, these solutions are
new. Plotting the metric functions, we find that some of the new
metrics have one stabilising direction when \( t \to\infty \) and no
collapsing directions (they are therefore ABC metrics of the sort
mentioned in connection with \eqref{eq:ABC}). The others have
shrinking directions which cause the volume growth to slow down as
shown in Figure \ref{fig:G2sol2Dvgrowth}.

\bigbreak

\begin{figure}[ht!]
  \begin{center}
    \subfigure[Solution curves with \( (U(0),V(0)) \) fixed.]{
      \label{fig:G2sol2DqcurveA}
      \includegraphics[width=0.35\textwidth]{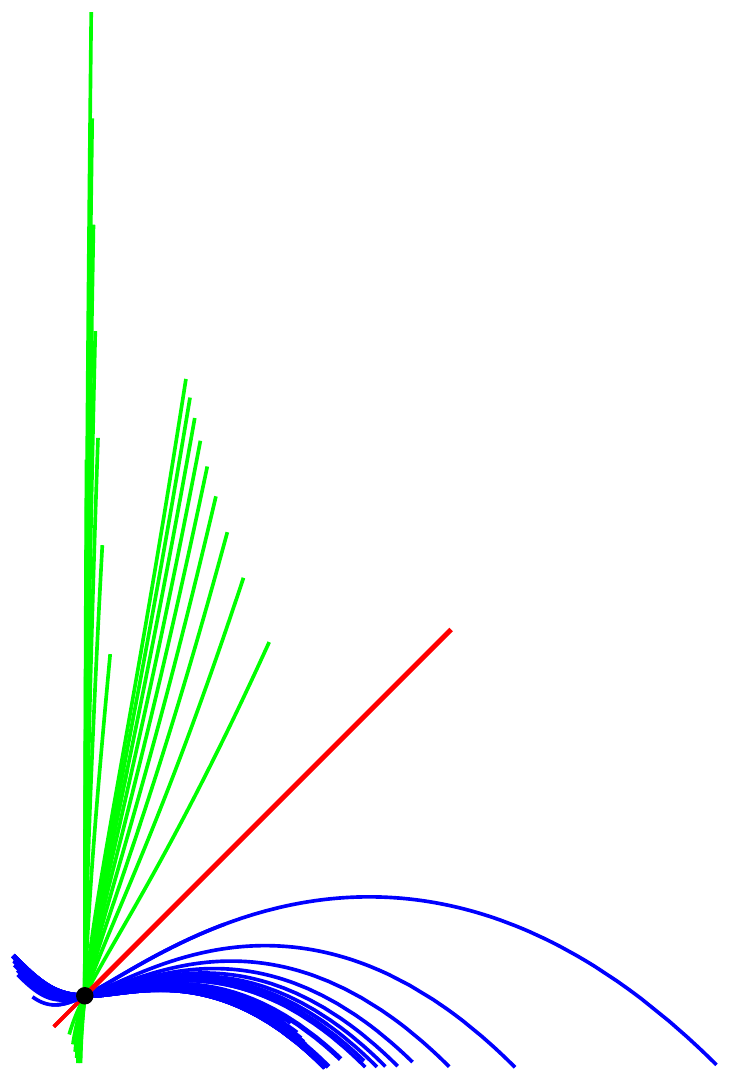}
    } \qquad \subfigure[Solution curves with \( (U'(0),V'(0)) \)
    fixed.]{
      \label{fig:G2sol2DqcurveB}
      \includegraphics[width=0.35\textwidth]{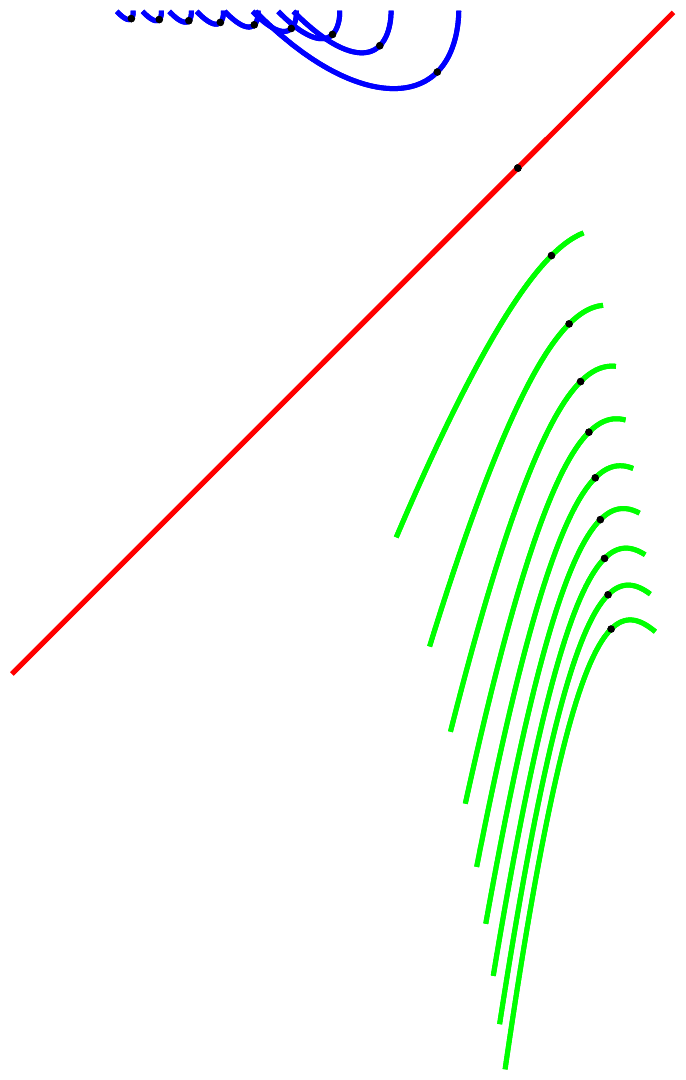}
    }\\[10pt]
    \subfigure[Volume growth for selected solutions.]{
      \label{fig:G2sol2Dvgrowth}
      \includegraphics[width=0.6\textwidth]{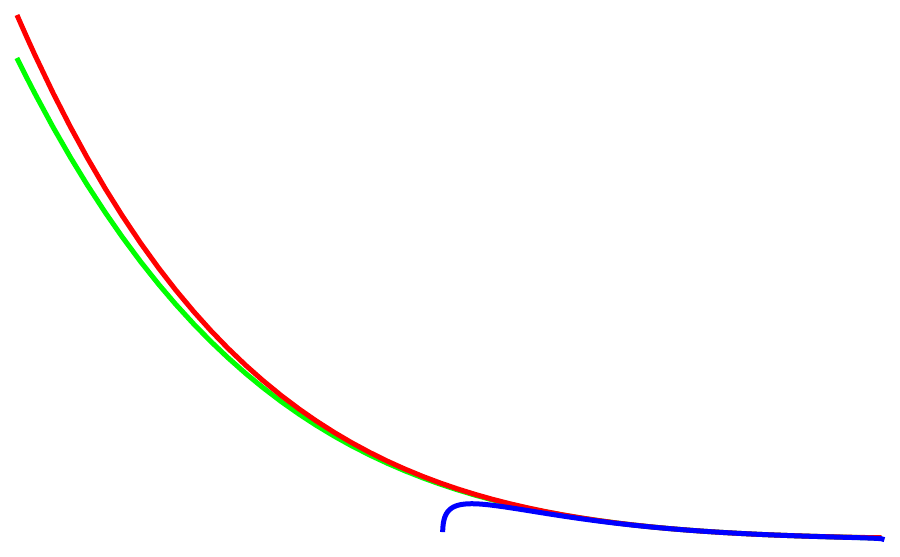}
    }
  \end{center}
  \caption{A collection of ``planar solutions'' satisfying \( a=0=b
    \). The solution curves are given in terms of \(
    t\mapsto(U(t),V(t)) \) whilst the volume growth refers to \(
    t\mapsto\sqrt{-\lambda(t)} \).}
  \label{fig:G2sol2D}
\end{figure}

More precisely, in the case \( U(0)=V(0) \), the normalisation forces
\( Q'(0) \), written as \( (x,y)=(U'(0),V'(0)) \), to lie on the curve
\begin{equation}
  \label{eq:nromcurve}
  x(x+y)^2= -2\sqrt{3},
\end{equation}
which has two branches separated by the line \( x+y=0 \). One branch
corresponds to positive-definite metrics, including the
nearly-K\"ahler solution
\begin{equation}
  \label{eq:nkc}
  x=y=\nu,\quad\hbox{where}\quad\nu=-3^{1/6}/2^{1/3}=-0.953\ldots
\end{equation}
The ABC metrics are those for which \(\nu<x<0\), and appear on the top
left of the nearly-K\"ahler line in Figure \ref{fig:G2sol2DqcurveA},
in green in the coloured version.

When \( U(0)\neq V(0) \), the nearly-K\"ahler solution is
excluded. Nevertheless, the overall picture remains valid, meaning one
branch of the normalisation curve corresponds to positive-definite
metrics, and this branch itself has two half pieces, one corresponding
to ABC curves and one to the other solutions.

In the trace-free case, \( a=0=b \), all solutions degenerate at a
point \( t_0 \). The ABC solutions are ``half complete'', meaning that
away from the degeneration they are complete in one direction of
time. (See \cite{Apostolov-S:G2,Chiossi-F:G2} for other examples of
half-complete \( \GT \)-metrics). The other solutions reach another
degeneracy point \( t_1 \) in finite time. The singularity at \( t_0
\) cannot be resolved. In particular, it is not possible to find
complete \( \GT \)-metrics. One way to circumvent this issue is to
consider flow solutions for which \( [\gamma]\neq0 \); solutions of
this form include the metrics discovered by Brandhuber et al
\cite{Brandhuber-al:G2}.

\paragraph{Three-function ansatz.}
Now, turning to ``less symmetric'' \( \GT \)-metrics, we consider for
solutions in \( \mathcal H_0 \) with \( Q \) of the (generic) form:
\[ Q(t)=\diag(-U(t)-V(t)-W(t),U(t),V(t),W(t)),\] where \( U,V,W \) are
smooth functions on an interval \( I \subset\bR \). A solution of
\eqref{eq:G2flow-matr} is then uniquely specified by the sextuple
\[ (U(0),V(0),W(0),U'(0),V'(0),W'(0)). \]

As in the case of planar solutions, we have solved the flow equations
for a large number of initial conditions. In contrast with the planar
case, we have not been able to find metrics with one stabilising
directions as \( t \to\pm\infty \).

We shall confine our presentation to the class of solutions with the
same initial point
\[ (U(0),V(0),W(0))=(1,1,1) \] as the nearly-K\"ahler solution, but
with varying velocity vector
\begin{equation}
  \label{eq:initialvel3d}
  (x,y,z)=(U'(0),V'(0),W'(0)).
\end{equation}
Similar to the planar case, the flow lines are governed by the
normalization condition, and \eqref{eq:nromcurve} is replaced by the
cubic surface
\begin{equation}
  \label{eq:spaceconstraint}
  (x+y)(x+z)(y+z)=-4\sqrt3.
\end{equation}
The asymptotic planes corresponding to the vanishing of \(
x+y,\,x+z,\,y+z \) separate the surface into four hyperboloid-shaped
components, and only the one with all factors negative is relevant to
our study of positive-definite metrics with holonomy \( \GT \). The
nearly-K\"ahler solution \(x=y=z=\nu\) (cf.~\eqref{eq:nkc})
corresponds to its centre point.

\bigbreak

\begin{figure}[ht!]
  \begin{center}
    \subfigure[Side view with diagonal nearly-K\"ahler line.]{
      \label{fig:G2sol3DqcurveA}
      \includegraphics[width=0.8\textwidth]{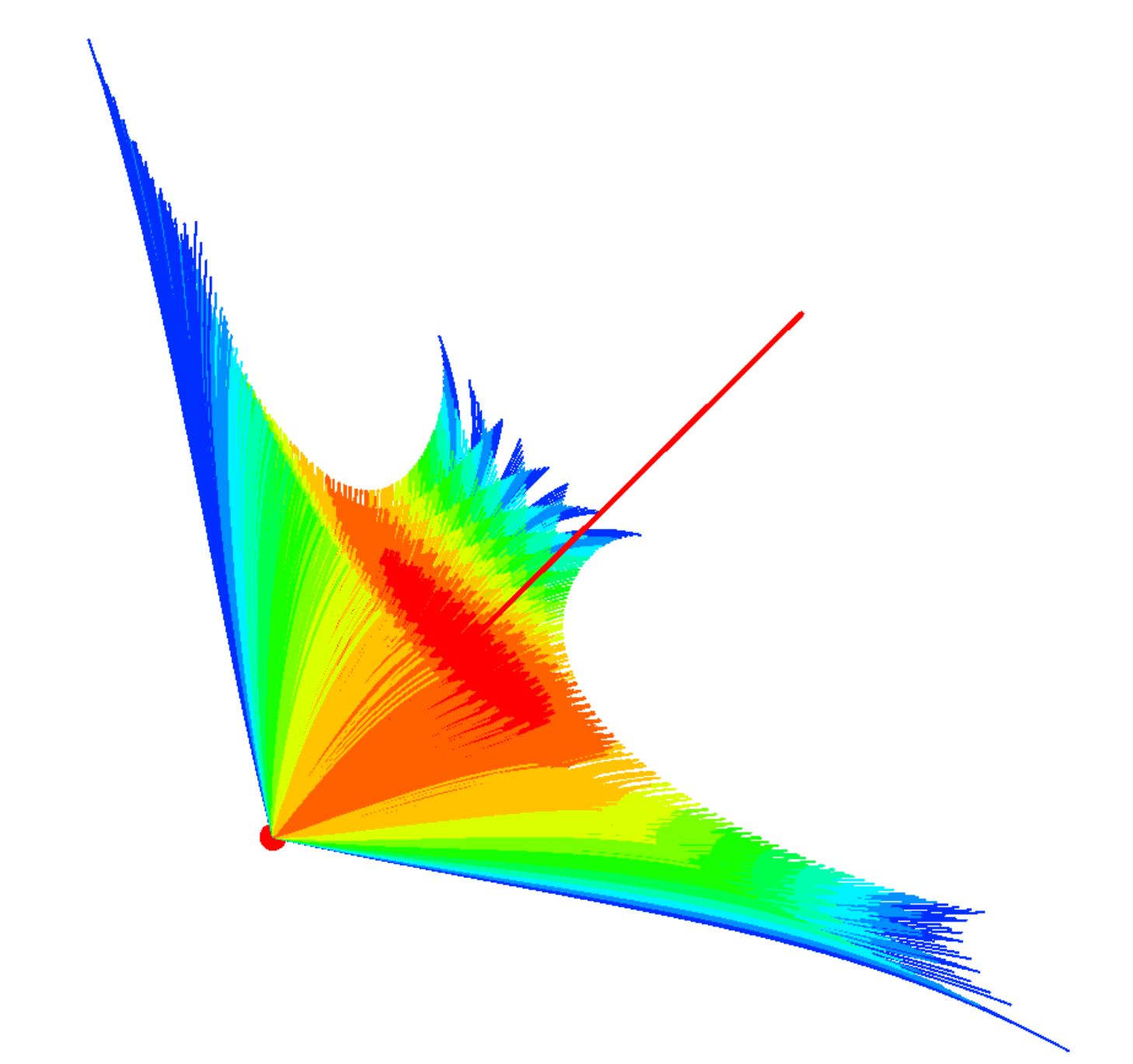}
    }\\[10pt]
    \subfigure[Looking down the line.]{
      \label{fig:G2sol3DqcurveB}
      \includegraphics[width=0.54\textwidth]{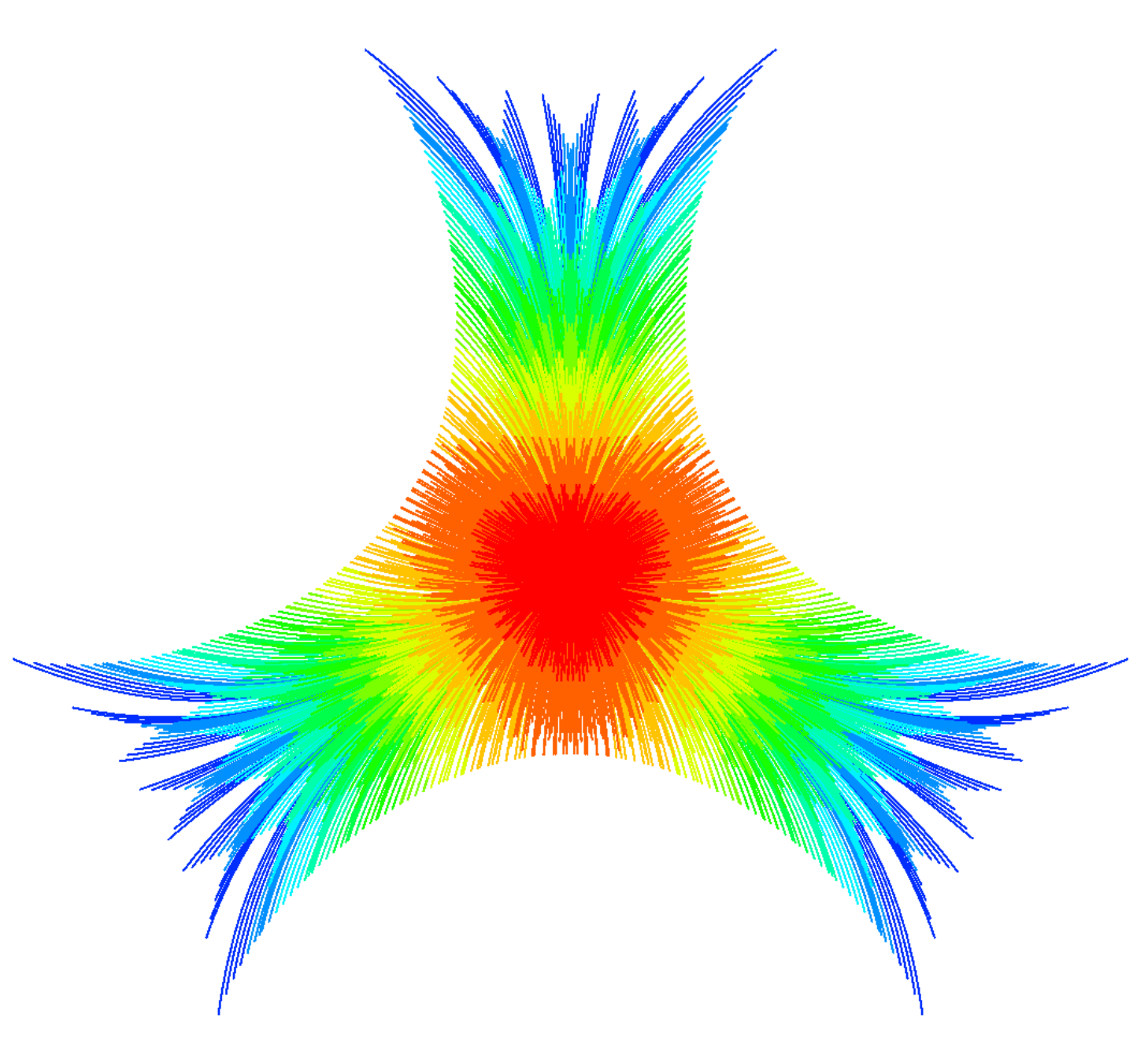}
    }\kern-5pt \subfigure[Planar ABC solutions.]{
      \label{fig:G2sol3DqcurveC}
      \includegraphics[width=0.45\textwidth]{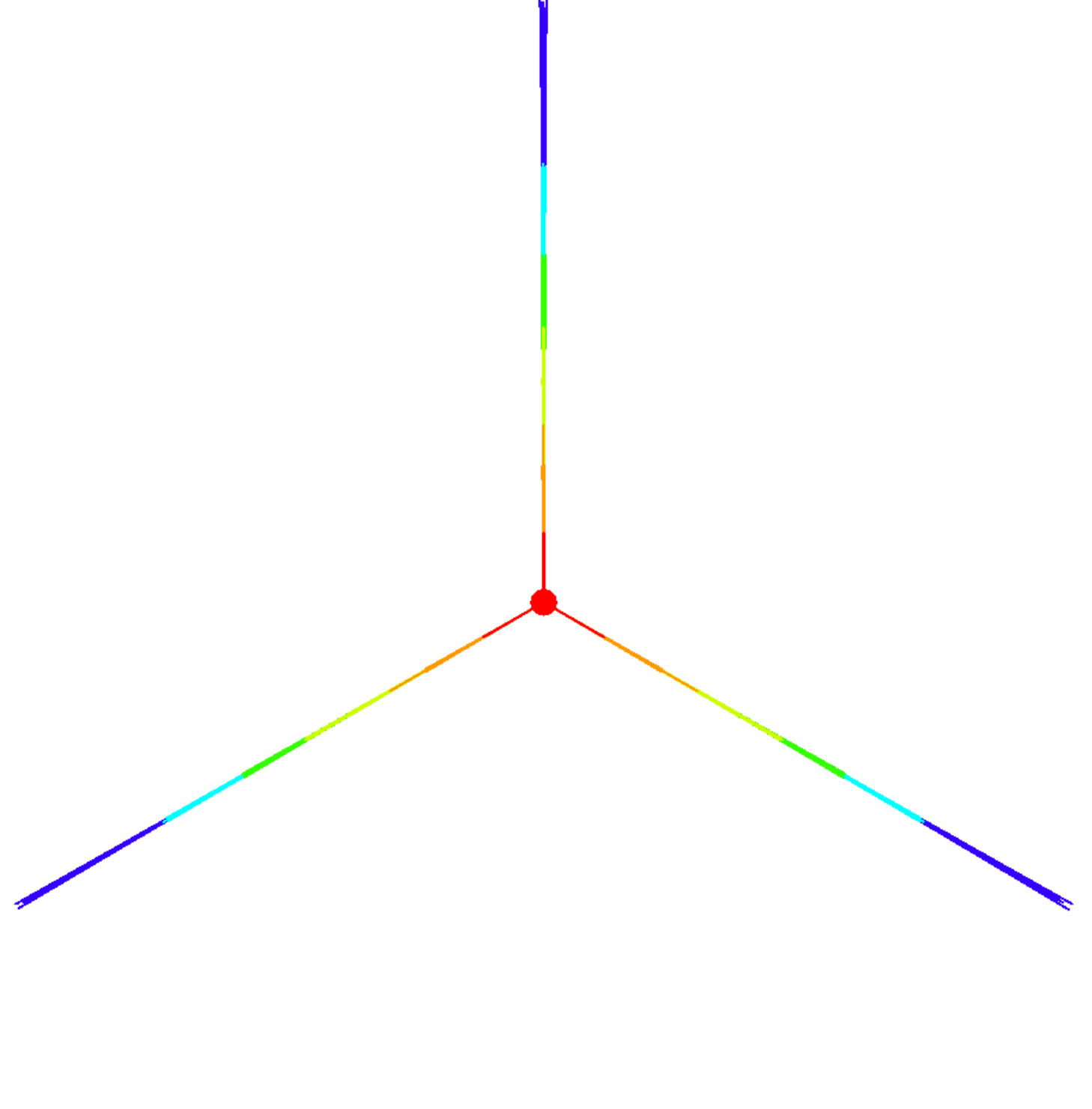}
    }
  \end{center}
  \caption{Families of space curve solutions satisfying \( a=0=b
    \). The solution curves are given in terms of \(
    t\mapsto(U(t),V(t),W(t)) \).}
  \label{fig:G2sol3D}
\end{figure}

Families of solutions are shown in Figure~\ref{fig:G2sol3D} which,
like those in Figure~\ref{fig:G2sol2D}, were plotted using
\textsl{Mathematica} and the command \textsl{NDSolve}. To obtain the
curves, it was convenient to further reduce attention to the case in
which \( x,y,z \) are all negative. The corresponding subset of
\eqref{eq:spaceconstraint} is now a curved triangle \( \mathscr{T} \)
with truncated vertices.  By issuing a plotting command for \(
\mathscr T \), we obtained an abundant sample of mesh points to feed
into \eqref{eq:initialvel3d} as initial values. One can then regard
each curve as the continuing trajectory of a particle launched towards
a point of \( \mathscr T \), which fits in close to the apex of
Figure~\ref{fig:G2sol3DqcurveA}.

\smallbreak

All the solutions, apart from the central nearly-K\"ahler one, are
new. They tend to have shrinking directions, causing the volume growth
to slow down. The \( 5250 \) solution curves in
Figure~\ref{fig:G2sol3DqcurveA} are plotted for the range \(
-0.97\leqslant t \leqslant 0 \) since many develop singularities close
to \( t=-1 \) (and close to \( t=0.2 \) though positive \( t \) is not
shown). In the coloured ``cocktail umbrella'' picture, they are
separated into groups distinguished by the value of the function \(
x^2+y^2+z^2 \) of the initial condition, with the nearly-K\"ahler line
\(x=y=z\) and its close neighbours in red.  Solutions resulting from
one of the coordinates being positive can be short-lived in comparison
to the others, leading to less coherent plots, and this is why they
are absent.

The view looking down the nearly-K\"ahler line from a point \(
(u,u,u)\) with \( u\gg1 \) is shown in
Figure~\ref{fig:G2sol3DqcurveB}. The \( \mathbb{Z}\slash{3\mathbb{Z}}
\) symmetry obtained by permuting the coordinates is evident. The
splitting behaviour at the three ``ends'' is to some extent
artificial, reflecting as it does the truncation that has resulted
from our decision to restrict attention to the negative octant.

The ABC two-function solutions of Figure~\ref{fig:G2sol2DqcurveA} in the
previous subsection arise when two of \( x,y,z \) coincide and assume
a common value greater than \(\nu\). The projection of these planar
curves orthogonal to the nearly-K\"ahler line can be seen in
Figure~\ref{fig:G2sol3DqcurveC}. Computations confirm that, unlike the
generic curves of Figure~\ref{fig:G2sol3DqcurveB} emanating from
\((1,1,1)\), these can be extended for all \(t\to-\infty\).

\smallbreak

In addition to the solutions in \( \mathcal H_0=\mathcal H_{(0,0)} \),
we have investigated solutions in \( \mathcal H_{(1,-1)} \). Regarding
the asymptotic behaviour of the associated \( \GT \)-metrics, the
overall picture appears not dissimilar to the one we have described by
deforming the nearly-K\"ahler velocity. Taking account also of the
numerical analysis in \cite{Cvetic-al:G2-Spin7}, we conjecture that
the only solutions that can be extended for \( t\to-\infty \) or
\(t\to\infty\) lie in a plane.

\paragraph*{Acknowledgements.}
Both authors thank Mark Haskins for discussions that helped initiate
this research, and in particular for bringing \cite{Hengesbach:phd} to
their attention. The first author gratefully acknowledge financial
support from the \textsc{Danish Council for Independent Research,
  Natural Sciences}.

\bigskip

\begin{small}
  \parindent0pt\parskip\baselineskip

  Thomas Bruun Madsen and Simon Salamon

  Department of Mathematics, King's College London,\\
  Strand, London WC2R 2LS, United Kingdom.
  
  \textit{E-mail}: thomas.madsen@kcl.ac.uk, simon.salamon@kcl.ac.uk
\end{small}

\end{document}